\colorlet{shadecolor}{blue!15}
\newtheorem{theorem}{Theorem}
\newtheorem{conj}{Conjecture}
\newtheorem{claim}{Claim}
\newtheorem{corollary}{Corollary}
\newtheorem{lemma}{Lemma}
\newtheorem{proposition}{Proposition}
\newtheorem{remark}[theorem]{Remark}
\newtheorem{question}[conj]{Question}
\newcommand{\calA}{\mathcal{A}}
\newcommand{\calB}{\mathcal{B}}
\newcommand{\calP}{\mathcal{P}}
\newcommand{\bbP}{\mathbb{P}}
\newcommand{\bbR}{\mathbb{R}}
\newcommand{\bbZ}{\mathbb{Z}}
\newcommand{\sfG}{{\mathsf{G}}}
\newcommand{\sfB}{{\mathsf{B}}}
\newcommand{\bfn}{\mathbf{n}}
\newcounter{cst}
\numberwithin{equation}{section}
\newcommand{\rk}[1]{\bgroup\color{red}%
  \par\medskip\hrule\smallskip%
  \noindent\textbf{#1}%
  \par\smallskip\hrule\medskip\egroup}
\newcommand{\notleftrightnn}  {\mathrel{\ooalign{$\xleftrightarrow{\substack{ \phantom{+}\bfn \phantom{+}}}$\cr\hidewidth$\scaleobj{0.7}{/}$\hidewidth}}}
\newcommand{\notleftrightxy}  {\mathrel{\ooalign{$\xleftrightarrow{\substack{\bfn_{[xy]}}}$\cr\hidewidth$\scaleobj{0.7}{/}$\hidewidth}}}
\title{Translation-Invariant Gibbs States of Ising model:\\ General Setting}
\author{Aran Raoufi}
\date{\today}
\begin{document}
\maketitle
 
 \begin{abstract}
We prove that at any inverse temperature $\beta$ and on any transitive amenable graph, the automorphism-invariant Gibbs states of the ferromagnetic Ising model are convex combinations of the plus and minus states. 
This is obtained for a general class of interactions, that is automorphism-invariant and irreducible coupling constants. 
The proof uses the random current representation of the Ising model. 
The result is novel when the graph is not $\bbZ^d$, or when the graph is $\bbZ^d$ but endowed with infinite-range interactions, or even $\bbZ^2$ with finite-range interactions.

Among the corollaries of this result, we can list continuity of the magnetization at any non-critical temperature, the differentiability of the free energy, and the uniqueness of FK-Ising infinite-volume measures.
 \end{abstract}
 \section{Introduction} 
\paragraph{Motivation and history of the problem.}
Let $\sfG$ be a countable locally finite transitive graph. Let $(J_{xy})_{x,y\in \sfG}$ be a family of nonnegative real numbers. 
For any $\tau \in \{-1,0, +1\}^{\sfG }$, define the Hamiltonian of the ferromagnetic Ising model on a finite subset $\Lambda \subset \sfG$ 
\begin{equation*} \label{hamil}
H_{\Lambda,\tau}(\sigma) := -\sum_{x, y \in \Lambda} J_{xy} \sigma_x \sigma_y  -   \sum_{\substack{x \in \Lambda \\y \in \sfG \setminus \Lambda}} J_{xy}\sigma_x \tau_x
\end{equation*}
for any $\sigma \in \{-1, +1\}^\Lambda$. 
For $\beta \in [0, \infty)$, the Ising measure on $\Lambda$ with boundary condition $\tau$ at inverse temperature $\beta$ is the probability measure $ \langle.\rangle_{\Lambda, \beta}^\tau$ on configurations $\sigma \in  \{-1, +1\}^\Lambda$ such that
$$ \langle \sigma \rangle_{\Lambda, \beta}^\tau = \frac {\exp (-\beta H_{\Lambda,\tau}(\sigma))} {Z(\Lambda, \beta, \tau)},$$
where $Z(\Lambda, \beta, \tau)$ is a normalizing constant. Let $\langle . \rangle_{\Lambda, \beta}^+$ (respectively $\langle . \rangle_{\Lambda, \beta}^-$, $\langle . \rangle_{\Lambda, \beta}^0$) denote the case when $\tau \equiv 1$ (respectively $\tau \equiv -1$, $\tau \equiv 0$).

In their seminal work, Dobrushin and Lanford-Ruelle \cite{DobGibs,LanRuelle} set a general framework to study Gibbs states of statistical physics lattice models on the infinite lattices, which are equilibrium states of the infinite system.
For the Ising model at inverse temperature $\beta$, the set of Gibbs states of a system is equal to the set of probability measures $\mu$ on $\{+1,-1\}^\sfG$ satisfying the DLR equation, which states that for any finite $\Lambda \subset G$ and any $f: \{ -1, + 1\}^{\Lambda} \rightarrow \bbR$,
\begin{equation*} \label{eq:DLR}
\mu[f]= \int_{\tau \in \{+1,-1\}^G}  \langle f \rangle_{\Lambda, \beta}^\tau \, \,d\mu (\tau) .
\end{equation*}
see \cite{georgii2011gibbs, friedli2016statistical}.

The set of Gibbs states is non-empty. The two states $\langle . \rangle_{G,\beta}^+$ and $\langle . \rangle_{G,\beta}^-$ can be obtained by taking weak limits of  respectively the measures  $\langle . \rangle_{\Lambda_n,\beta}^+$ and $\langle . \rangle_{\Lambda_n,\beta}^-$, where $(\Lambda_n)_{n \geq 1}$ is a family of finite sets exhausting $\sfG$. It is easy to show that these two measures are extremal Gibbs states. They are called the \emph{plus} and the \emph{minus} states. Another Gibbs state obtainable as the weak limit of Ising measures on finite boxes is $\langle . \rangle_{G,\beta}^0$ which is the weak limit of   $\langle . \rangle_{\Lambda_n,\beta}^0$. Although $\langle . \rangle_{G,\beta}^0$ is translational invariant it is not necessarily extremal for all values of $\beta$.

Understanding the model is entangled with understanding the Gibbs states of the system. It is known that the set of Gibbs states for the Ising model is a simplex (this in fact can be shown for a very general class of systems)  \cite{friedli2016statistical}, and hence the question of identifying the Gibbs states is equivalent to the question of understanding the extremal Gibbs states. 

The focus of this article is translational invariant Gibbs states. Lebowitz, in his pioneering work \cite{lebowitz1977coexistence}, studied translational 
invariant Gibbs states, and proved that there exist at most countably many $\beta$ for which there exist extremal translation-invariant states other 
than $\langle . \rangle_{G,\beta}^+$ and $\langle . \rangle_{G,\beta}^-$. Lebowitz's method is robust and works for general interactions and any 
amenable graph, but the drawback is that it does not provide any control or information over those countably many values of $\beta$ where uniqueness may fail. 

An important progress was later made in works of Aizenman \cite{aizenman1980translation} and Higuchi \cite{higuchi1981absence} (see \cite{coquille2012finite} for a new proof) who proved that on $\bbZ^2$ with the nearest-neighbor interaction,  the only extremal measures are the pure plus and minus states. The result of Aizenman and Higuchi is not valid for $\bbZ^d$ with $d \geq 3$ and the question of identifying all the extremal Gibbs states remains widely open. 

Dobrushin \cite{dobrushin1973gibbs} found examples of extremal Gibbs states which are not plus or minus states. Nevertheless, Dobrushin's examples were not translation-invariant, and it was indeed still expected that the only translation-invariant Gibbs states were the convex combinations of the plus and minus states. 

Note that when $\beta < \beta_c$ (see below for the definition of $\beta_c$), it is easy to show that $\langle . \rangle_{G,\beta}^+ = \langle . \rangle_{G,\beta}^-$ and the set of Gibbs states is a singleton. Hence, the main problem is understanding the Gibbs states for $\beta \geq \beta_c$.  The advancement in understanding the Gibbs states of Ising model on $\bbZ^d$ for $\beta= \beta_c$ resulted from works that proved that the phase transition is continuous, that is $\langle . \rangle_{G,\beta}^+ = \langle . \rangle_{G,\beta}^-$ for $\beta=\beta_c$. 
This was done for $d \geq 4$ in the eighties by Aizenman and Fernandez \cite{AizFer86}. 
The continuity of the phase transition for the case of $d=3$  is proved by Aizenman, Duminil-Copin, Sidoravicius in \cite{AizDumSid15}.

Below the critical temperature, for any $\beta>\beta_c$ and for finite-range interaction on $\bbZ^d$, Bodineau \cite{bodineau2006translation} proved that the only extremal translation-invariant Gibbs states are the plus and minus states. 

Here, we extend results concerning translation-invariant Gibbs states to a generalized setting. We go further than $\bbZ^d$ and study Ising models on transitive amenable graphs with a very general set of interaction. 
We prove that any automorphism-invariant Gibbs states is the convex combination of the plus and minus states (see Theorem \ref{thm:main}). 
The motivation for studying statistical physics model on general graphs goes beyond generalizing the results on $\bbZ^d$ to other graphs.
Studying statistical physics model on general graphs often leads to proofs that only rely on the coarse geometric structure of the graph and hence sheds more light on the behavior of the models. Also, by proving the result in the general setting, we settle some previously unknown case for $\sfG=\bbZ^d$, namely

\begin{itemize}
\item $d=2$ with interactions which are not nearest-neighbor,
\item any $d$ and the system has infinite-range interaction.
\end{itemize}

Let us finish this subsection by highlighting one special case we find especially worthy of attention: when $G=\bbZ$ and $J_{xy}= |x-y|^{-2}$ the phase transition is not continuous, that is the magnetization is positive at $\beta_c$ \cite{aizenman1988discontinuity}. However, perhaps surprisingly, our result implies that there exists only two extremal translation-invariant states. That is, the discontinuity of magnetization does not necessarily imply that $\langle . \rangle^0_{\sfG, \beta_c} \neq \tfrac 1 2 \langle . \rangle^+_{\sfG, \beta_c} +  \tfrac 1 2 \langle . \rangle^-_{\sfG, \beta_c}$.


\paragraph{Statement of the result.} Let $\sfG$ be a countable locally finite graph with edge set $E(\sfG)$. We slightly abuse the notation and let  $\sfG$ also denote the vertex set of the graph. $\sfG$ is \emph{amenable} if 
$$\inf_{A \subset \sfG, \: \vert A \vert < \infty} \frac{\vert \partial A \vert}{\vert A \vert} = 0,$$
where $\partial A = \{ x \in \sfG \setminus A:  \exists y \in A, \,  \{x,y\} \in E(\sfG) \}.$ 
Let $\mathrm{Aut}(\sfG)$ denote the automorphism group of $\sfG$. From now on, fix a vertex-transitive subgroup $\Gamma$ of $\mathrm{Aut}(\sfG)$.

We consider the Ising model on $\sfG$ with coupling constants $( J_{xy})_{x,y \in \sfG}$ satisfying the following conditions:
\begin{itemize}[noitemsep, nolistsep]
\item[\textbf{(C1)}]
Ferromagnetic: For any $x,y \in \sfG$, $J_{xy} \geq 0$.
\item[\textbf{(C2)}]
Invariance under automorphisms of $\Gamma$: For any automorphism $\varphi \in \Gamma$, $J_{\varphi(x) \varphi(y)} = J_{xy}$.
\item[\textbf{(C3)}]
Irreducibility: For any $x,y \in \sfG$, there exist $v_1,v_2, \dots, v_k \in \sfG$ such that
$$ J_{xv_1} J_{v_1v_2} \dots J_{v_{k-1}v_k} J_{v_k y} > 0 .$$
\item[\textbf{(C4)}]
Local finiteness: For any $x \in \sfG$, $$\sum_{y \in G} J_{xy} < \infty.$$
\end{itemize}
Let $\mu$ be a measure on $\{+1, -1\} ^\sfG$. For $\varphi \in \Gamma$ an automorphism of $\sfG$, and an event $A \subset \{+1, -1\} ^ \sfG $ define 
$$\varphi ^{-1}(A) = \{\sigma \circ \varphi ^{-1} : \sigma \in A \},$$
where $\circ$ is composition of the functions.
The measure $\mu$ is called \emph{$\Gamma$-invariant} if for any event $A$ and any $\varphi \in \Gamma$, $\mu [A] = \mu [\varphi^{-1} (A)]$.
\begin{theorem}  \label{thm:main}
Let $\sfG$ be an amenable transitive graph endowed with coupling constants satisfying conditions \emph{(C1-C4)}. Then, for any $\Gamma$-invariant Gibbs state $\mu$ associated with the Ising model with $\beta \geq 0$, there exists $\lambda  \in [0,1]$ such that
$$ \mu = \lambda \, \langle . \rangle_\beta^+ + (1 -\lambda) \, \langle . \rangle_\beta^-.$$
\end{theorem}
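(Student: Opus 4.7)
The plan is to combine the random current representation of the Ising model with amenability-driven cluster-density estimates, in the spirit of Bodineau's analysis on $\bbZ^d$. Since the $\Gamma$-invariant Gibbs states form a Choquet simplex whose extremal points are precisely the $\Gamma$-ergodic Gibbs states, it suffices to show that any $\Gamma$-ergodic Gibbs state $\mu$ equals $\langle \cdot \rangle_\beta^+$ or $\langle \cdot \rangle_\beta^-$. Amenability supplies a F{\o}lner sequence $(\Lambda_n)$, and the pointwise ergodic theorem then yields $\mu$-almost-sure convergence of the empirical magnetization $|\Lambda_n|^{-1}\sum_{x \in \Lambda_n} \sigma_x$ to the constant $m(\mu) := \mu[\sigma_0]$. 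The standard FKG-based stochastic dominations $\langle \cdot \rangle_\beta^- \preceq \mu \preceq \langle \cdot \rangle_\beta^+$ bound $|m(\mu)| \leq m^* := \langle \sigma_0 \rangle_\beta^+$, and the endpoint cases $m(\mu) = \pm m^*$ immediately give $\mu = \langle \cdot \rangle_\beta^\pm$; the task is thus to show that the interior case $|m(\mu)| < m^*$ forces $\mu$ to be a genuine convex combination of the two pure phases.

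Next, I would use the DLR equation to expand $\mu$ as an average of finite-volume Ising states $\langle \cdot \rangle_{\Lambda_n,\beta}^\tau$ with $\tau \sim \mu$, and then rewrite each of these via the random current representation on $\Lambda_n$. The boundary spins $\tau$ become fixed sources on the external neighbours of $\partial \Lambda_n$, and the switching lemma turns correlations of $\mu$ into connectivity probabilities in a doubled current $\bfn_1 + \bfn_2$. The decisive structural fact is that, conditional on the current $\bfn$, the induced spin configuration is uniform $\pm 1$ on each connected cluster of $\bfn$, with the sign of any cluster touching $\partial \Lambda_n$ fixed by $\tau$. All finite-dimensional marginals of $\mu$ are therefore recovered from the cluster partition of $\bfn$ together with the signs assigned to the boundary-touching clusters.

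The heart of the argument is then a density theorem: with probability tending to $1$ as $n \to \infty$, the random current inside $\Lambda_n$ sampled under this measure admits a single cluster $\calC_n^\star$ whose relative size $|\calC_n^\star|/|\Lambda_n|$ converges to $1$. Irreducibility (C3) and local finiteness (C4) make the underlying weighted graph connected and of finite local weight, amenability renders the boundary negligible, and a Burton--Keane-type uniqueness argument for the random current, combined with the $\Gamma$-ergodicity of $\mu$ and a coarse-graining over F{\o}lner boxes, should promote positive density of a giant cluster to density one. Once established, the sign of $\calC_n^\star$ is, in the limit, a $\{\pm 1\}$-valued random variable $\varepsilon$ with $\mathbb{E}[\varepsilon] = m(\mu)/m^*$; conditioning on $\varepsilon$ expresses
$$\mu = \tfrac{1}{2}\bigl(1 + m(\mu)/m^*\bigr)\, \langle \cdot \rangle_\beta^+ + \tfrac{1}{2}\bigl(1 - m(\mu)/m^*\bigr)\, \langle \cdot \rangle_\beta^-,$$
which is the desired decomposition.

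The main obstacle is precisely this density-one statement at every $\beta \geq 0$ on an arbitrary transitive amenable graph, including regimes where the magnetization is discontinuous (as in the long-range one-dimensional example highlighted in the introduction) and without access to contour or surface-tension machinery specific to $\bbZ^d$ with finite-range interactions. One must extract giant-cluster uniqueness purely from amenability and a random-current adaptation of Burton--Keane, and then upgrade positive density to density one using only $\Gamma$-ergodicity and local finiteness of the coupling constants. A secondary technical point is justifying the switching lemma and the random-current construction for the infinite-range interactions allowed by (C4), which should be handled by finite-range truncation together with monotone limits controlled by $\sum_{y} J_{xy} < \infty$.
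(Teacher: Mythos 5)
There is a genuine gap, and it sits exactly where you locate it yourself: the ``density theorem.'' As stated it is false, not merely unproven. By the switching lemma, the probability that two vertices $x,y$ lie in the same cluster of the doubled current is $\langle \sigma_x\sigma_y\rangle^0\,\langle \sigma_x\sigma_y\rangle^+$, which tends (along $y\to\infty$) to a limit bounded by $(m^*)^2<1$ whenever the spontaneous magnetization is less than $1$; combined with uniqueness of the infinite cluster this forces the asymptotic density of the largest current cluster in $\Lambda_n$ to be strictly less than $1$ for all $\beta<\infty$. So $|\calC_n^\star|/|\Lambda_n|\to 1$ cannot hold, and even the weaker statement ``a unique giant cluster of positive density'' does not by itself yield your final display: conditioning on the sign of one macroscopic cluster does not determine the law of the spins off that cluster, which is where all the work lies. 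In effect you have replaced the theorem by an assumption at least as strong as the theorem. A secondary inaccuracy: in the random current representation the boundary spins do not become ``fixed sources''; the boundary condition is encoded by a ghost vertex $\delta$ and the relevant currents remain sourceless.

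The paper takes a different and shorter route that you should compare against. It first invokes Lebowitz's reduction: every $\Gamma$-invariant Gibbs state is a convex combination of $\langle\cdot\rangle^+$ and $\langle\cdot\rangle^-$ if and only if $\langle\sigma_x\sigma_y\rangle^+=\langle\sigma_x\sigma_y\rangle^0$ for all pairs with $J_{xy}>0$ (this is where your Choquet/ergodic-decomposition and FKG-sandwich preliminaries are subsumed, and it removes the need to decompose $\mu$ directly). It then proves this two-point identity by contradiction: if it failed, the event $\calA_{xy}$ (an edge whose removal disconnects two infinite arms) would have positive probability under the infinite-volume current measures, and a counting-of-ends argument --- using the Burton--Keane output that the unique infinite cluster of the single and doubled currents has at most two ends on an amenable graph --- produces three edge-disjoint infinite paths, a contradiction. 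If you want to rescue your plan, the realistic move is to prove the two-point identity (Proposition \ref{prop:semimain}) rather than the density statement, and to let Lebowitz's theorem do the decomposition for you.
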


Lebowitz, in his classical work \cite{lebowitz1977coexistence}, reduced the problem to the analysis of the two-point function of the free and plus states. He showed that for a transitive graph endowed with coupling constants satisfying {(C1-C4)}, the followings are equivalent:
\begin{itemize} [nolistsep, noitemsep]
\item
Every automorphism-invariant Gibbs state of Ising model, at inverse temperature $\beta$, is a convex combination of the plus and minus states.
\item 
For any $x,y \in \sfG$ satisfying $J_{xy}>0$, $\langle \sigma_x \sigma_y \rangle ^+_\beta =\langle \sigma_x \sigma_y \rangle ^0_\beta$.
\end{itemize} 
Hence, our goal in this article is to prove the following proposition.
\begin{proposition} \label{prop:semimain}
Let $\sfG$ be an amenable transitive graph endowed with coupling constants satisfying \emph{(C1-C4)}. For any $\beta \in [0, \infty]$ and $x , y \in \sfG$ such that $J_{xy} >0$,
\begin{equation} \label{eq:main}
\langle\sigma_x \sigma_y\rangle  ^+_{\sfG, \beta} = \langle \sigma_x \sigma_y \rangle ^0_{\sfG, \beta}.
\end{equation}
\end{proposition}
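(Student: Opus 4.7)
The plan is to use the random current representation of the Ising model. Let $\mathbf{n}:E(\sfG)\to\bbZ_{\geq 0}$ denote a current, with source set $\partial\mathbf{n}=\{x:\sum_y n_{xy}\text{ odd}\}$ and weight $w_\beta(\mathbf{n})=\prod_{\{x,y\}}(\beta J_{xy})^{n_{xy}}/n_{xy}!$. For a finite $\Lambda\Subset\sfG$ containing $x$ and $y$, one has $\langle\sigma_x\sigma_y\rangle^0_{\Lambda,\beta}=Z_\Lambda^{\{x,y\}}/Z_\Lambda^\emptyset$ where $Z_\Lambda^A=\sum_{\partial\mathbf{n}=A}w_\beta(\mathbf{n})$. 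Plus boundary conditions are encoded by a ghost vertex $g$ to which boundary vertices are attached with the weights coming from interactions with $\sfG\setminus\Lambda$ (equivalently, by identifying the exterior to a single spin fixed at $+1$), and an analogous formula holds for $\langle\sigma_x\sigma_y\rangle^+_{\Lambda,\beta}$ on the enlarged graph. Condition (C4) is what makes these sums convergent when passing to infinite volume.

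Next, I apply the switching lemma to a pair of independent currents: $\mathbf{n}_1$ with sources $\{x,y\}$ (the ``free'' side) and $\mathbf{n}_2$ with empty sources on the ghost-enlarged graph (the ``plus'' side). The standard manipulation produces an identity of the shape
\begin{equation*}
\langle\sigma_x\sigma_y\rangle^+_{\Lambda,\beta}-\langle\sigma_x\sigma_y\rangle^0_{\Lambda,\beta}
\;=\;\langle\sigma_x\sigma_y\rangle^0_{\Lambda,\beta}\cdot\bbP^{\{x,y\},\emptyset,+}_{\Lambda}\bigl[\,x\leftrightarrow g\text{ in }\mathbf{n}_1+\mathbf{n}_2\,\bigr],
\end{equation*}
or a closely related expression whose right-hand side controls the left-hand side. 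Taking the weak limit along an exhausting sequence $\Lambda_n\uparrow\sfG$ (which exists by local finiteness and amenability), the identity persists, and the connection-to-ghost event becomes the event that $x$ lies in an infinite cluster of $\mathbf{n}_1+\mathbf{n}_2$ under a limiting random current measure $\bbP^{\{x,y\},\emptyset}$. Proving Proposition~\ref{prop:semimain} is thus reduced to showing this infinite-cluster probability is zero.

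To kill the infinite-cluster probability, I use the hypotheses on $\sfG$ in a Burton--Keane spirit. Averaging the limiting measure over $\Gamma$ produces a $\Gamma$-invariant measure on currents whose connection clusters partition $\sfG$; by ergodic decomposition the number of infinite clusters is a.s.\ in $\{0,1,\infty\}$. Amenability rules out $\infty$, since infinitely many disjoint infinite clusters would force a positive density of trifurcation vertices in each F\o lner set $\Lambda_n$, incompatible with $|\partial\Lambda_n|/|\Lambda_n|\to 0$. In the remaining case of a unique infinite cluster, a surgery argument combining an insertion--tolerance property for random currents (adjusting $\mathbf{n}_2$ locally while preserving $\partial\mathbf{n}_2=\emptyset$ and leaving the fixed sources $\{x,y\}$ of $\mathbf{n}_1$ untouched) with $\Gamma$-invariance forces the probability that $x$ lies in that infinite cluster to vanish, and combining the two cases gives the conclusion.

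The principal obstacle, in my view, is the combination of infinite-range interactions and the absence of any concrete geometry beyond amenability. Standard random-current surgery and Burton--Keane arguments are written for $\bbZ^d$ with finite-range couplings, whereas here the currents are integer-valued on edges that may be genuinely long, and the finite-energy modifications must coexist with the constraint $\partial\mathbf{n}_1=\{x,y\}$. Engineering a source-preserving, insertion-tolerant surgery for currents under only (C1)--(C4), and executing Burton--Keane using only coarse amenability and transitivity, is the technically delicate part; irreducibility (C3) should be what allows connecting local surgeries into a global modification.
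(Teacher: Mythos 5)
Your setup (currents, ghost vertex, switching lemma) matches the paper's, and your observation that $\langle\sigma_x\sigma_y\rangle^+_\Lambda-\langle\sigma_x\sigma_y\rangle^0_\Lambda>0$ forces the sourceless-plus-sourced pair $\bfn_1+\bfn_2$ to connect $x$ and $y$ to the ghost without connecting them to each other inside $\Lambda$ --- hence, in the limit, to produce an infinite cluster with positive probability --- is essentially the content of the paper's Lemma \ref{lem:finsit}. The fatal problem is your endgame: you propose to conclude by proving that the limiting double random current has \emph{no} infinite cluster, via Burton--Keane plus an insertion-tolerance surgery. But non-percolation of the double current is equivalent to uniqueness of the Gibbs state (this is exactly the criterion of Aizenman--Duminil-Copin--Sidoravicius recalled in the paper's introduction), and uniqueness fails for every $\beta>\beta_c$ --- precisely the regime in which the proposition has any content. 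For $\beta>\beta_c$ the double current genuinely percolates with a unique infinite cluster of positive density, so no surgery or ergodicity argument can make the event $\{x\leftrightarrow\infty\}$ null; the reduction you aim for is provably false where it matters.

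What the paper does instead is keep the finer information your reduction discards. The assumption $\langle\sigma_x\sigma_y\rangle^+>\langle\sigma_x\sigma_y\rangle^0$ yields not merely percolation but a positive probability for the \emph{pivotal-edge} event $\calA_{xy}$: the current equals $1$ on $\{x,y\}$, both endpoints join the infinite cluster once that edge is removed, yet they are then disconnected from each other. Ergodicity makes such pivotal edges dense, so every infinite path in the infinite cluster must cross one (Claim 1 inside Lemma \ref{lem:flow}); combined with the Burton--Keane input that the unique infinite cluster has at most two ends (Theorem \ref{thm:burtonkeane}, which is the only place uniqueness and amenability enter), this forces the cluster to be a chain of finite pieces strung along pivotal edges, whence at most two edge-disjoint infinite paths from any vertex and vanishing probability of two edge-disjoint $x$--$y$ paths as $y\to\infty$. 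The contradiction is then obtained by producing \emph{lower} bounds on exactly these quantities (Lemma \ref{lem:n1con} and a case analysis according to whether $\langle\sigma_x\sigma_y\rangle^0$ tends to zero and whether the single free current percolates), including a sourced-to-sourceless comparison along a path of positive couplings that uses (C3). None of this machinery --- the event $\calA_{xy}$, the two-ends constraint, the edge-disjoint-path counts, the case split --- appears in your proposal, and without it the argument cannot close.
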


Proposition \ref{prop:semimain} has several other consequences.
Fix an origin $0 \in \sfG$, the quantity $\langle \sigma_0 \rangle _{G, \beta}^+ $ is of special interest and is called \emph{spontaneous magnetization}. 
The critical point of the model is defined as
$$ \beta_c := \inf \, \lbrace\beta:  \langle \sigma_0 \rangle_{G, \beta}^+  > 0 \rbrace. $$
Understanding the continuity of the spontaneous magnetization as a function of $\beta$ is an important problem. Theorem \ref{thm:main} has the following corollary.
\begin{corollary} \label{cor:magcon}
For an amenable transitive graph $\sfG$ endowed with coupling constants satisfying conditions \emph{(C1-C4)}, $\beta \rightarrow \langle \sigma_0 \rangle_{\sfG,\beta}^+$ is continuous everywhere except possibly at $\beta_c$. 
\end{corollary}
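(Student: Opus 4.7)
The strategy is to combine standard monotonicity and semi-continuity with the uniqueness of the FK-Ising infinite-volume measure, which is itself a direct consequence of Proposition~\ref{prop:semimain}.

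Set $\theta(\beta) := \langle \sigma_0 \rangle^+_{\sfG, \beta}$. By the second Griffiths inequality, $\theta$ is non-decreasing in $\beta$. I would also write $\theta(\beta) = \inf_n \langle \sigma_0 \rangle^+_{\Lambda_n, \beta}$ along any exhausting sequence $\Lambda_n \uparrow \sfG$ (a decreasing limit by monotonicity of plus boundary conditions in the volume), each term being real-analytic in $\beta$; this identifies $\theta$ as the infimum of continuous functions, hence upper semi-continuous. A non-decreasing upper semi-continuous function is right-continuous at every point, so $\theta$ is right-continuous on $[0,\infty)$. For $\beta < \beta_c$ the claim $\theta(\beta)=0$ is immediate by definition.

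For left-continuity at $\beta_0 > \beta_c$, I would pass through the FK-Ising representation. Proposition~\ref{prop:semimain} gives $\langle \sigma_x \sigma_y \rangle^+_\beta = \langle \sigma_x \sigma_y \rangle^0_\beta$ whenever $J_{xy}>0$; extending this to all $(x,y)$ by GKS-II and the irreducibility condition (C3) and invoking the Edwards-Sokal coupling, the infinite-volume FK-Ising measure is unique, $\phi^0_\beta = \phi^1_\beta =: \phi_\beta$, and $\theta(\beta) = \phi_\beta[0 \leftrightarrow \infty]$. Sandwiching $\phi^0_{\Lambda,\beta} \leq_{\mathrm{st}} \phi_\beta \leq_{\mathrm{st}} \phi^1_{\Lambda,\beta}$ and letting $\Lambda \uparrow \sfG$ then shows that $\beta \mapsto \phi_\beta$ is weakly continuous.

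Given $\beta_k \uparrow \beta_0$, Strassen's theorem produces a monotone coupling with $\omega_k \sim \phi_{\beta_k}$ and $\omega_k \uparrow \omega_\infty \sim \phi_{\beta_0}$, so the task reduces to showing $\phi_{\beta_k}[0 \leftrightarrow \infty] \to \phi_{\beta_0}[0 \leftrightarrow \infty]$. This is the main technical step, because the event $\{0 \leftrightarrow \infty\}$ is not continuous in the weak topology. Here amenability of $\sfG$ enters crucially: Burton-Keane's argument shows that $\phi_{\beta_0}$-almost surely there is a unique infinite cluster, and a finite-energy/path-rerouting argument then forces the vertex $0$, if connected to infinity in $\omega_\infty$, to already be so in $\omega_k$ for $k$ sufficiently large. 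This yields $\theta(\beta_k) \uparrow \theta(\beta_0)$ and completes the proof. The Burton-Keane step is the main obstacle and is precisely where $\beta_0 > \beta_c$ is used, via the positive density of the infinite cluster; at $\beta_c$ a genuine jump is possible (as in the $\bbZ$, $J_{xy}=|x-y|^{-2}$ example highlighted in the introduction), which is why the corollary correctly excludes the critical point.
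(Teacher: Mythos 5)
Your argument is correct and is essentially the route the paper itself takes, namely the standard one it delegates to \cite{grimmett1995stochastic}: right-continuity of $\beta\mapsto\langle\sigma_0\rangle^+_{\sfG,\beta}$ from monotone finite-volume approximation, and left-continuity for $\beta>\beta_c$ from uniqueness of the infinite-volume FK-Ising measure combined with the van den Berg--Keane argument (monotone coupling in $\beta$ plus Burton--Keane uniqueness of the infinite cluster, which is where amenability and $\beta>\beta_c$ enter). The one imprecision is the claim that GKS-II together with irreducibility extends $\langle\sigma_x\sigma_y\rangle^+_\beta=\langle\sigma_x\sigma_y\rangle^0_\beta$ from pairs with $J_{xy}>0$ to all pairs --- that step does not follow from GKS-II as stated; the clean route is Lebowitz's equivalence (quoted just before Proposition~\ref{prop:semimain}) or simply Corollary~\ref{cor:fk}, which already yields $\phi^0_{\sfG,\beta}=\phi^1_{\sfG,\beta}$, the only fact your argument actually uses downstream.
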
 
The already mentioned discontinuity of phase transition for the case of $\bbZ$ and $J_{xy}= |x-y|^{-2}$ shows that the continuity at $\beta_c$ is not valid for any choice of coupling constants.
It is expected that the magnetization is continuous at $\beta_c$  for amenable graphs when endowed with nearest-neighbor coupling constants and satisfying $\beta_c < \infty$.
This is so far proved for $\bbZ^d$ \cite{Yan52,AizFer86,AizDumSid15}, amenable graphs of exponential growths \cite{raoufi2016note}, and general bi-periodic two-dimensional graphs \cite{duminil-lis2017}.
See Question \ref{q:nonamcr} for a discussion on the non-amenable case. 
 
\medbreak
Another object of interest is the free energy. Fix $\tau \in \{+1, 0, -1 \}^{\sfG}$, and fix a sequence of finite subsets $\Lambda_n$, satisfying $\tfrac{|\partial \Lambda_n|}{|\Lambda_n|} \rightarrow 0$, exhausting $\sfG$. Define the \emph{free energy}
$$\psi(\sfG, \beta) := \lim_{n \rightarrow \infty} \, \frac{1}{| \Lambda_n|} \log Z(\Lambda_n, \beta, \tau) . $$
It is straightforward to show that the limit  $\psi$ exists, and for amenable graphs it does not depend on the choice of the boundary condition $\tau$ (we refer the reader to \cite{friedli2016statistical}).
\begin{corollary} \label{cor:pressure}
For an amenable transitive graph $\sfG$ endowed with coupling constants satisfying conditions \emph{(C1-C4)}, the function $\psi(\sfG, \beta)$ is differentiable for any $\beta \in [0, \infty]$.
\end{corollary}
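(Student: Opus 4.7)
The plan is to combine the convexity of $\psi(\sfG,\cdot)$ in $\beta$ with Proposition \ref{prop:semimain}. Each finite-volume free energy $\psi_{\Lambda,\tau}(\beta):=\tfrac{1}{|\Lambda|}\log Z(\Lambda,\beta,\tau)$ is a smooth convex function of $\beta$ (a log-Laplace transform), so its pointwise limit $\psi(\sfG,\cdot)$ is convex. Consequently the one-sided derivatives $\psi'_-(\beta_0)\le \psi'_+(\beta_0)$ exist at every $\beta_0\ge 0$, and differentiability at $\beta_0$ is equivalent to their coincidence.

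The next step is to identify these one-sided derivatives with energy densities in the plus and free Gibbs states. Differentiation yields
\[
(\psi_{\Lambda_n,\tau})'(\beta) \;=\; \tfrac{1}{|\Lambda_n|}\sum_{x,y\in\Lambda_n} J_{xy}\,\langle\sigma_x\sigma_y\rangle^{\tau}_{\Lambda_n,\beta} \;+\; R_n(\beta,\tau),
\]
where $R_n$ collects the boundary-interaction terms involving $\tau$; by amenability of $\sfG$ together with (C4), via a F\o lner-type truncation in the range of $J$, one has $R_n\to 0$ uniformly in $\beta$ on compact sets. By (C2) and transitivity, together with dominated convergence controlled by $\sum_y J_{0y}<\infty$ and $|\langle\sigma_0\sigma_y\rangle|\le 1$, the bulk term converges (up to a fixed normalization) to
\[
e^\tau(\beta) \;:=\; \sum_{y\in\sfG} J_{0y}\,\langle\sigma_0\sigma_y\rangle^{\tau}_\beta, \qquad \tau\in\{+,0\}.
\]
The standard convex sandwich applied to the convex approximants $\psi_{\Lambda_n,\tau}$ then gives $\psi'_-(\beta_0)\le e^\tau(\beta_0)\le \psi'_+(\beta_0)$ for both $\tau\in\{+,0\}$.

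I would then upgrade these inequalities to equalities via one-sided continuity of $e^\pm$ in $\beta$. By FK--Ising (equivalently Griffiths) monotonicity, $\langle\sigma_0\sigma_y\rangle^+_{\Lambda,\beta}$ is decreasing in $\Lambda$ while $\langle\sigma_0\sigma_y\rangle^0_{\Lambda,\beta}$ is increasing; hence $\langle\sigma_0\sigma_y\rangle^+_\beta$ is an infimum of $\beta$-continuous functions, thus upper semicontinuous in $\beta$, and being monotone in $\beta$ (Griffiths II) it is right-continuous. Dually, $\langle\sigma_0\sigma_y\rangle^0_\beta$ is left-continuous. By dominated convergence with summable majorant $(J_{0y})_y$, these properties transfer to $e^+$ being right-continuous and $e^0$ being left-continuous. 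For convex $\psi$ and $\beta>\beta_0$, the slope inequality $\psi'_+(\beta_0)\le \psi'_-(\beta)\le e^+(\beta)$ holds; letting $\beta\searrow\beta_0$ and using right-continuity of $e^+$ yields $\psi'_+(\beta_0)\le e^+(\beta_0)$, and combining with the sandwich forces $\psi'_+(\beta_0)=e^+(\beta_0)$. The mirror argument gives $\psi'_-(\beta_0)=e^0(\beta_0)$. Proposition \ref{prop:semimain} then asserts $\langle\sigma_0\sigma_y\rangle^+_{\beta_0}=\langle\sigma_0\sigma_y\rangle^0_{\beta_0}$ for every $y$ with $J_{0y}>0$, so $e^+(\beta_0)=e^0(\beta_0)$, whence $\psi'_+(\beta_0)=\psi'_-(\beta_0)$ and $\psi$ is differentiable at $\beta_0$.

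The main technical obstacle I expect lies in the infinite-range aspect: controlling the boundary remainder $R_n$ when $J_{xy}$ is not of bounded range, and transferring the one-sided continuity from each individual two-point function to the infinite series defining $e^\tau(\beta)$. Both steps hinge on dominated convergence against the summable majorant $\sum_y J_{0y}<\infty$ furnished by (C4), combined with a F\o lner-type choice of $\Lambda_n$ adapted to the interaction range, with uniformity of the $y$-tail in $\beta$ on a neighborhood of $\beta_0$; these are standard manoeuvres but have to be executed with some care in the general-graph setting.
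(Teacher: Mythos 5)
Your argument is correct and is precisely the standard Lebowitz--Griffiths convexity argument that the paper itself invokes by deferring to the literature (\cite{grimmett1995stochastic}): convexity of $\psi$ sandwiches the one-sided derivatives between the plus and free energy densities, one-sided continuity (from monotonicity in volume and in $\beta$) identifies $\psi'_\pm$ with $e^+$ and $e^0$, and Proposition \ref{prop:semimain} forces these to coincide. The technical points you flag (the boundary remainder for infinite-range $J$ and dominated convergence against $\sum_y J_{0y}<\infty$) are handled exactly as you describe, so there is no gap.
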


\paragraph{Consequences for the FK-Ising model.}
Let $\calP_2(\mathsf G)=\big \lbrace \{x,y\} \subset \sfG \big\rbrace$. For $\Lambda \subset \sfG$, define $\calP_2(\mathsf G, \Lambda) = \calP_2(\sfG) \setminus \calP_2(\Lambda)$.
Fix $\xi \in \{0,1\}^{\calP_2(\mathsf G,\Lambda)}$. 
For  $\Lambda \subset \sfG$ a finite subgraph of $\sfG$, the \emph{FK-Ising} model at inverse temperature $\beta$ with boundary condition $\xi$ is a measure on $\{ 0,1\}^{\calP_2(\mathsf \Lambda)}$ defined as
$$
	\phi^\xi_{\Lambda, \beta} [\omega] = 
	\frac{ 2^{k^\xi(\omega)}  \prod_{ x,y \in \sfG} (e^{2\beta J(x,y)}-1) ^{\omega(x,y)}  }{ Z_{\textrm{FK}}(\Lambda, \beta, \xi)},
	$$ 
where $k^\xi(\omega)$ is equal to the number of connected components of the graph with vertex set $\sfG$ and edge set $\omega \cup \xi$ that intersect $\Lambda$. The term $Z_{\textrm{FK}}(\Lambda, \beta, \xi)$ is a normalizing constant. 
Let $\phi_{\Lambda, \beta}^1$ (respectively $\phi_{\Lambda, \beta}^0)$ denote the case when $\xi \equiv 1$ (respectively $\xi \equiv 0$). 
The FK-Ising model is related to the Ising model through a coupling giving
$$\langle \sigma_x \sigma_y \rangle_{\Lambda, \beta} ^0 =  \phi^0_{\Lambda, \beta} [x \leftrightarrow y], \qquad   \langle \sigma_x \sigma_y \rangle_{\Lambda, \beta} ^+ =  \phi^1_{\Lambda, \beta} [x \leftrightarrow y].$$
We refer the reader to \cite{Gri06, duminil2017lectures} for more details on FK-Ising.

Just like for the Ising model, one can define the set of infinite-volume FK-Ising measures using the spatial Markov property. The set of infinite-volume FK-Ising measures is the set of probability measures $\nu$ on $\{0,1\}^{\calP_2(\sfG)}$ such that for any finite $\Lambda \subset \sfG$ and any $f:\{0,1\}^\Lambda \rightarrow \bbR$ they satisfy
\begin{equation*} 
\nu[f]= \int_{\xi \in \{0,1\}^{\calP_2(\sfG,\Lambda)}}  	\phi^\xi_{\Lambda, \beta} [f] \, \,d\nu (\xi) .
\end{equation*}

\begin{corollary} \label{cor:fk}
Let $\sfG$ be an amenable transitive graph. The set of FK-Ising infinite-volume measures with coupling constants satisfying the conditions \emph{(C1-C4)} has only one element. In particular, $\phi^0_{\sfG, \beta} = \phi^1_{\sfG, \beta}$.
\end{corollary}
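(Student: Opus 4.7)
The plan is to derive uniqueness of infinite-volume FK-Ising measures from Proposition \ref{prop:semimain} via the Edwards--Sokal coupling. The first step is to establish equality of the edge marginals of $\phi^0_{\sfG,\beta}$ and $\phi^1_{\sfG,\beta}$. In the Edwards--Sokal coupling of Ising and FK-Ising, conditional on the spin configuration $\sigma$, each pair $e = \{x,y\}$ is independently declared open with probability $p_e \mathbbm{1}[\sigma_x = \sigma_y]$, where $p_e := 1 - e^{-2\beta J_{xy}}$. Taking expectations at finite volume and passing to the weak limit yields
\begin{equation*}
\phi^1_{\sfG, \beta}[\omega(e) = 1] = \tfrac{p_e}{2}\bigl(1 + \langle \sigma_x \sigma_y \rangle^+_{\sfG, \beta}\bigr), \qquad \phi^0_{\sfG, \beta}[\omega(e) = 1] = \tfrac{p_e}{2}\bigl(1 + \langle \sigma_x \sigma_y \rangle^0_{\sfG, \beta}\bigr).
\end{equation*}
Proposition \ref{prop:semimain} makes the two right-hand sides coincide whenever $J_{xy} > 0$, and both vanish when $J_{xy} = 0$ (as then $p_e = 0$). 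Hence $\phi^0_{\sfG, \beta}[\omega(e) = 1] = \phi^1_{\sfG, \beta}[\omega(e) = 1]$ for every pair $e \in \calP_2(\sfG)$.

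Given this equality of edge marginals, I would deduce $\phi^0_{\sfG, \beta} = \phi^1_{\sfG, \beta}$ via a monotone coupling. Since the cluster weight $q = 2 \geq 1$, the random-cluster FKG inequality yields the stochastic ordering $\phi^0_{\sfG, \beta} \leq_{\mathrm{st}} \phi^1_{\sfG, \beta}$. Strassen's theorem then provides a coupling $\Pi$ of the two measures with $\omega^0 \leq \omega^1$ pointwise, and equality of edge marginals forces
\begin{equation*}
\Pi\bigl[\omega^0(e) = 0,\; \omega^1(e) = 1\bigr] = \phi^1_{\sfG, \beta}[\omega(e)=1] - \phi^0_{\sfG, \beta}[\omega(e)=1] = 0
\end{equation*}
for every pair $e$. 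Thus $\omega^0 = \omega^1$ almost surely under $\Pi$, and consequently $\phi^0_{\sfG, \beta} = \phi^1_{\sfG, \beta}$.

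For the full uniqueness statement, I would invoke the standard sandwiching result for random-cluster measures with $q \geq 1$: every infinite-volume FK-Ising measure $\nu$ satisfies $\phi^0_{\sfG, \beta} \leq_{\mathrm{st}} \nu \leq_{\mathrm{st}} \phi^1_{\sfG, \beta}$, and the equality just established forces $\nu = \phi^0_{\sfG, \beta}$. I do not anticipate any serious obstacle here; the only technical point is to check that the classical Edwards--Sokal identity, the FKG inequality, and the sandwich result extend from the nearest-neighbor $\bbZ^d$ setting to the present framework of transitive amenable graphs with automorphism-invariant, locally finite, possibly infinite-range couplings. These extensions are routine adaptations that use only condition (C4) and weak-limit arguments, so the substantive content of the corollary really lies in Proposition \ref{prop:semimain}.
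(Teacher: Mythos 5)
Your argument is correct and is precisely the standard route the paper delegates to the literature (\cite{grimmett1995stochastic}): the Edwards--Sokal identity converts Proposition \ref{prop:semimain} into equality of edge marginals, which combined with the stochastic ordering $\phi^0_{\sfG,\beta} \leq_{\mathrm{st}} \phi^1_{\sfG,\beta}$ and the sandwiching of arbitrary infinite-volume measures yields uniqueness. The extensions to infinite-range, automorphism-invariant couplings under (C4) are indeed routine, so there is nothing to add.
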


\paragraph{About the proof.} 
The proof is based on the random current representation of the Ising model. 
More precisely, we will study the geometrical properties of the currents' infinite-volume limit. 
In \cite{AizDumSid15}, the authors utilized the infinite-volume random current measure to prove continuity of magnetization at criticality for $\bbZ^d$.  They proved that if  $\inf_x \, \langle \sigma_0 \sigma_x\rangle^0 = 0$, then there exists a unique Gibbs state. The assumption of $\inf_x \, \langle \sigma_0 \sigma_x\rangle^0 = 0$ was relevant for them because their study focused on $\beta_c$, and there this assumption was guaranteed by the infra-red bound in the case of $\bbZ^d$. They showed that uniqueness of Gibbs states is equivalent to nonexistence of infinite cluster for the double random currents. 

We also work with infinite-volume random currents, but our study relies on more involved properties than the existence or nonexistence of the infinite cluster. 
Our approach controls geometrical properties of the infinite cluster of the double current. We prove that $\langle . \rangle_{\sfG, \beta}^+ \neq \langle . \rangle_{\sfG, \beta}^0$ forces certain geometry on the infinite-volume currents, and then we show that this geometry is paradoxical.

Having Proposition \ref{prop:semimain} at hand, the proof of the corollaries is already in the literature: we refer the reader to \cite{grimmett1995stochastic}.  
Section 2 gathers the required preliminaries related to the random current representation. Proposition \ref{prop:semimain} is proved in the third section.  Finally in the last section we discuss some open questions.

\paragraph{Acknowledgment.}
This research was supported by an IDEX grant from Paris-Saclay. This research was started when the author was a PhD student at the University of Geneva and was supported by the NCCR SwissMAP, the ERC AG COMPASP, and the Swiss NSF.
The author is thankful to H. Duminil-Copin, A. Van Enter, and Y. Velenik for many helpful comments on the manuscript.

\section{Random Current Representation}
A \emph{current} on a graph $\Lambda$ (finite or infinite) is a function from pairs of vertices to non-negative integers. Let $\Omega_\Lambda$ denote the set of all currents on $\Lambda$, that is $\Omega_\Lambda$ $:= \{0,1,2,...\}^{\calP_2(\Lambda)}$. 
For a current $\bfn \in \Omega_\Lambda$, define the sources of $\bfn$ to be
$$\partial \bfn := \big \lbrace x \in \Omega_\Lambda : \sum_{y \in \Lambda} \bfn_{xy} \text{ is odd} \big \rbrace.$$
For finite $\Lambda \subset \sfG$ and $\beta \geq 0$, the weight of the current $\bfn \in \Omega_\Lambda$ is defined by
$$\omega_\beta(\bfn) := \prod_{x,y \in \Lambda} \frac{(\beta J_{xy})^{\bfn_{xy}}}{\bfn_{xy}!}.$$
Many quantities of the Ising model can be expressed in terms of different combinatorial sums on weighted currents. 
This, together with the switching lemma (Lemma \ref{lem:switch}), made the currents a very effective tool in the study of the Ising model \cite{Aiz82,AizFer86,shlosman1986signs, AizBarFer87, sakai2007lace, AizDumSid15, DumTas15}.
Here we only describe the expression of two-point correlations in terms of the current.
For any $x,y \in \Lambda$,
\begin{equation} \label{eq:iscor}
\langle \sigma_x \sigma_y \rangle^0_{\Lambda, \beta} = \frac{\displaystyle \sum_{\bfn \in \Omega_\Lambda, \, \partial \bfn = \{ x,y\}} \omega_\beta(\bfn)}{\displaystyle \sum_{\bfn \in \Omega_\Lambda, \, \partial \bfn = \emptyset} \omega_\beta(\bfn)}.
\end{equation}
Similarly, the correlation for the Ising model with plus boundary condition can be expressed in terms of currents. To achieve this, one modifies the graph $\Lambda$ as follows: define $\Lambda \cup \delta$ to be the graph obtained by adding a vertex $\delta$ to the set of vertices of $\Lambda$, and setting $J_{x \delta } = \sum_{y  \in \sfG \setminus \Lambda} J_{xy}$ for any $x \in \Lambda$. For any $x,y \in \Lambda$,
\begin{equation} \label{eq:ispcor}
\langle \sigma_x \sigma_y \rangle^+_{\Lambda, \beta} = \frac{\displaystyle \sum_{\bfn \in \Omega_{\Lambda\cup \delta}, \, \partial \bfn = \{ x,y\}} \omega_\beta(\bfn)}{\displaystyle \sum_{\bfn \in \Omega_{\Lambda \cup \delta}, \, \partial \bfn = \emptyset} \omega_\beta(\bfn)}.
\end{equation}

For $\bfn \in \Omega_\Lambda$, define the \emph{multigraph associated with $\bfn$} to be the graph with vertex set $\Lambda$ and $\bfn_{xy}$ edges between any two vertices $x$ and $y$. 
The connected components of the multigraph are called \emph{clusters}. A current $\bfn$ belongs to the event $\{x \stackrel{\bfn}{\longleftrightarrow} y\}$ 
if $x$, and $y$ are in the same cluster of the multigraph associated with $\bfn$. For $\Lambda' \subset \Lambda$, the event $\{x \stackrel{\bfn}{\longleftrightarrow} y$ in $\Lambda'\}$ is the event that there exists a path between $x$ and $y$ in the multigraph associated with $\bfn$ using only vertices in $\Lambda'$.


A very useful tool in the study of the currents is the switching lemma \cite{GriHurShe70}.
\begin{lemma}[Switching lemma] \label{lem:switch}
Let $\Lambda_1$ and $\Lambda_2$ be such that $\Lambda_1 \subset \Lambda_2$. Assume that $\Lambda_1$ and 
$\Lambda_2$ are respectively endowed with coupling constants $J_1$ and $J_2$ such that $J_2\big |_{\Lambda_1} = J_1$. For any $F:\Omega_{\Lambda} \rightarrow \bbR$, $\{x,y\} \in \Lambda_1$ and $A\subset \Lambda_2$,\vspace{0.5cm}
\begin{align*}
\displaystyle\sum_{\substack{\bfn_1 \in \Omega_{\Lambda_1}, \, \partial \bfn_1 = \{x,y \} \\  \bfn_2 \in \Omega_{\Lambda_2}, \, \partial \bfn_2 = A}} &\omega_\beta(\bfn_1) \, \omega_\beta(\bfn_2) \, F(\bfn_1+\bfn_2) \\
&= \displaystyle\sum_{\substack{\bfn_1 \in \Omega_{\Lambda_1}, \, \partial \bfn_1 = \emptyset \\  \bfn_2 \in \Omega_{\Lambda_2}, \, \partial \bfn_2 = A \triangle \{ x,y\}}} \omega_\beta(\bfn_1) \, \omega_\beta(\bfn_2) \, F(\bfn_1+\bfn_2) \, {\bf1} [x \xleftrightarrow{\bfn_1+\bfn_2} y \emph{\text{ in }} \Lambda_1 ].
\end{align*}
\end{lemma}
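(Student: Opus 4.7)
The plan is to change variables to the summed current $\bfn = \bfn_1 + \bfn_2 \in \Omega_{\Lambda_2}$ and thereby reduce the identity to a purely combinatorial counting statement. Since $\partial(\bfn_1+\bfn_2) = \partial \bfn_1 \triangle \partial \bfn_2$, on both sides the outer sum reindexes as a sum over $\bfn \in \Omega_{\Lambda_2}$ with $\partial \bfn = A \triangle \{x,y\}$, coupled with a sub-current $\bfn_1 \in \Omega_{\Lambda_1}$ satisfying $\bfn_1(e) \leq \bfn(e)$ for each $e \in \calP_2(\Lambda_1)$. Using the compatibility $J_2|_{\Lambda_1} = J_1$, the weights factor as
\[
\omega_\beta(\bfn_1)\,\omega_\beta(\bfn_2) \;=\; \omega_\beta(\bfn)\;\prod_{e \in \calP_2(\Lambda_1)} \binom{\bfn(e)}{\bfn_1(e)}.
\]

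After this substitution, both sides take the form $\sum_{\bfn}\omega_\beta(\bfn)\,F(\bfn)\,N_S(\bfn)$, where the sum runs over $\bfn\in\Omega_{\Lambda_2}$ with $\partial\bfn = A\triangle\{x,y\}$ and
\[
N_S(\bfn) \;:=\; \sum_{\substack{\bfn_1 \in \Omega_{\Lambda_1},\; \bfn_1 \leq \bfn \\ \partial \bfn_1 = S}}\; \prod_{e \in \calP_2(\Lambda_1)} \binom{\bfn(e)}{\bfn_1(e)},
\]
with $S=\{x,y\}$ on the left-hand side and $S=\emptyset$ on the right-hand side. Matching the two expressions term-by-term, the lemma reduces to the combinatorial identity
\[
N_{\{x,y\}}(\bfn) \;=\; N_{\emptyset}(\bfn)\cdot \mathbf{1}\bigl[x \xleftrightarrow{\bfn} y \text{ in } \Lambda_1\bigr]
\]
for every admissible $\bfn$.

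For the core step, fix $\bfn$ and let $M$ be the multigraph on vertex set $\Lambda_1$ obtained by placing $\bfn(e)$ parallel copies of each edge $e\in\calP_2(\Lambda_1)$. The binomial weight $\prod_e\binom{\bfn(e)}{\bfn_1(e)}$ counts exactly the number of ways to realize $\bfn_1$ as a subset $F\subseteq E(M)$, so $N_S(\bfn)$ equals the number of edge-subsets of $M$ whose odd-degree vertex set is $S$. The boundary map $F \mapsto \partial F$ is $\bbF_2$-linear from $\bbF_2^{E(M)}$ to $\bbF_2^{V(M)}$; its kernel is the cycle space, of dimension $|E(M)|-|V(M)|+c(M)$, and its image is precisely the set of $S\subseteq \Lambda_1$ meeting each connected component of $M$ in an even number of vertices. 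Hence $N_S(\bfn)$ is either $0$ or $2^{|E(M)|-|V(M)|+c(M)}$. For $S=\emptyset$ the parity condition is automatic, while for $S=\{x,y\}$ with $x\neq y$ it holds iff $x$ and $y$ lie in the same connected component of $M$, i.e.\ iff $x\xleftrightarrow{\bfn} y$ in $\Lambda_1$. The degenerate case $x=y$ yields $\{x,y\}=\emptyset$ and a trivial indicator, so the identity persists.

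The main (and really only) non-routine step is the cycle-space count in the third paragraph; once that is in place the rest of the argument is bookkeeping, checking that the reindexing and weight factorization are consistent with the source constraints on both sides.
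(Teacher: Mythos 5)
Your argument is correct: the change of variables to $\bfn=\bfn_1+\bfn_2$, the binomial factorization of the weights (valid precisely because $J_2|_{\Lambda_1}=J_1$), and the $\bbF_2$ cycle-space count showing that the set of sub-edge-sets with prescribed odd-degree set $S$ is either empty or a coset of the cycle space, is the classical proof of the switching lemma. Note, however, that the paper offers no proof to compare against --- it cites the lemma from Griffiths--Hurst--Sherman \cite{GriHurShe70} --- so your write-up is simply the standard argument from the literature, correctly executed; the only cosmetic slip is the remark that $x=y$ "yields $\{x,y\}=\emptyset$" (the source set is $\{x\}$ as a set, but under the usual mod-2 multiset convention for sources it is indeed treated as $\emptyset$, and the identity is trivial in that case anyway).
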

We now define a probability measure on the set of currents. For a finite graph $\Lambda$ and $A \subset \Lambda$, a probability measure on the set of currents with source $A$ can be defined with the law 
$$ \bbP_{\Lambda,\beta}^{A}(\bfn) = \frac{\omega_\beta(\bfn) \, {\bf1}_{\partial \bfn = A}}{\displaystyle\sum _{\bfn \in \Omega_\Lambda, \: \partial \bfn = A}  \omega_\beta (\bfn)}. $$
For ease of notation, $ \bbP_{\Lambda,\beta}$ denotes $\bbP^\emptyset_{\Lambda,\beta}$. Combining \eqref{eq:iscor} with Lemma \ref{lem:switch} gives us 
\begin{equation}\label{eq:iscor2}\langle \sigma_x \sigma_y \rangle_{\Lambda, \beta}^2 =  \bbP_{\Lambda,\beta} \otimes \bbP_{\Lambda,\beta} [x \xleftrightarrow{\bfn_1+\bfn_2} y].
\end{equation}
Equations \eqref{eq:iscor} and \eqref{eq:iscor2} show that the infinite-volume weak limits of the measures $\bbP_{\Lambda, \beta}$ and $\bbP_{\Lambda\cup \delta, \beta}$ are useful for studying $\langle  \sigma_x \sigma_y \rangle_{\sfG, \beta}^0$ and $\langle \sigma_x \sigma_y \rangle_{\sfG, \beta}^+$. By passing to the infinite-volume measure, one can exploit the ergodic properties of the measures.
The following is proved for $\bbZ^d$ in $\cite{AizDumSid15}$, but the proof follows the same lines for any transitive graph.
\begin{theorem}
Let $\sfG$ be an infinite transitive graph. There exist two measures $\bbP ^{0}_{\sfG,\beta}$ and $\bbP ^{+}_{\sfG, \beta}$ on $\Omega_\sfG$ such that $\bbP ^{0}_{\sfG, \beta}$ is the weak limit of $\bbP_{\Lambda_n,\beta}$, and $\bbP ^{+}_{\sfG, \beta}$ is the weak limit of $\bbP_{\Lambda_n \cup \delta ,\beta}$, where $(\Lambda_n)_{n \geq 1}$ is a sequence of sets exhausting $\sfG$. Furthermore, both $\bbP ^{0}_{\sfG, \beta}$ and $\bbP ^{+}_{\sfG, \beta}$ are $\Gamma$-invariant and are ergodic with respect to the action of $\Gamma$. 
\end{theorem}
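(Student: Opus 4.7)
\emph{Tightness.} For any edge $e=\{x,y\}$ and any $k \geq 0$, a direct computation from the weights $\omega_\beta$ yields the factorial moment identity
\[
\bbE_{\bbP_{\Lambda,\beta}}\bigl[\bfn_e(\bfn_e-1)\cdots(\bfn_e-k+1)\bigr]
= (\beta J_{xy})^k \,\langle (\sigma_x \sigma_y)^k \rangle^0_{\Lambda,\beta}
\leq (\beta J_{xy})^k.
\]
Hence $\bfn_e$ is stochastically dominated by $\mathrm{Poisson}(\beta J_{xy})$ uniformly in $\Lambda$, and the family $(\bbP_{\Lambda,\beta})_\Lambda$ is tight in the product topology on $\Omega_\sfG$. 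The analogous bound applies to $(\bbP_{\Lambda \cup \delta,\beta})$, since (C4) ensures that the ghost couplings $J_{x\delta}\leq \sum_{y\notin\Lambda} J_{xy}$ remain uniformly bounded.

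\emph{Identification of the limit and $\Gamma$-invariance.} Fix a finite edge set $S \subset \calP_2(\sfG)$ and a configuration $m \in \Omega_S$. Decomposing an arbitrary source-free current along its restriction to $S$ gives
\[
\bbP_{\Lambda,\beta}[\bfn|_S = m]
=
\omega_\beta(m)\,
\frac{\displaystyle \sum_{\bfn' :\, \bfn'|_S = 0,\; \partial \bfn' = \partial m} \omega_\beta(\bfn')}
     {\displaystyle \sum_{\bfn:\, \partial \bfn = \emptyset} \omega_\beta(\bfn)}.
\]
Rewriting the right-hand side via the random current representation of $\langle \sigma_{\partial m} \rangle^0$ applied with the couplings on $S$ switched off, the ratio becomes a product of two Ising free-state expectations of bounded local functions at couplings $J$ and the modified $J'$. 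GKS monotonicity makes $\langle \,\cdot\, \rangle^0_{\Lambda_n,\beta}$ converge along the exhaustion, so every finite-dimensional marginal of $\bbP_{\Lambda_n,\beta}$ converges and, combined with tightness, yields the weak limit $\bbP^0_{\sfG,\beta}$; the same argument on $\Lambda \cup \delta$ gives $\bbP^+_{\sfG,\beta}$. The $\Gamma$-invariance is then immediate: for any $\varphi \in \Gamma$, the sequence $(\varphi(\Lambda_n))$ also exhausts $\sfG$ and (C2) yields $\varphi_* \bbP_{\Lambda_n,\beta} = \bbP_{\varphi(\Lambda_n),\beta}$, so uniqueness of the limit forces $\varphi_* \bbP^0_{\sfG,\beta} = \bbP^0_{\sfG,\beta}$, and similarly for $\bbP^+_{\sfG,\beta}$.

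\emph{Ergodicity -- the main obstacle.} The target is $\Gamma$-mixing: for cylinder events $A, B$ depending on currents restricted to finite edge sets $S_A, S_B$ and for $(\varphi_k) \subset \Gamma$ pushing $S_B$ to infinity,
\[
\bbP^0_{\sfG,\beta}\bigl[A \cap \varphi_k^{-1} B\bigr]
\;\longrightarrow\;
\bbP^0_{\sfG,\beta}[A]\,\bbP^0_{\sfG,\beta}[B].
\]
The same rewriting used for identification expresses the left-hand side as an Ising free-state expectation of a product of two local factors supported on $S_A$ and $\varphi_k(S_B)$; the task reduces to asymptotic factorisation of such expectations. In the regime where the free Ising state itself is not extremal, decoupling under a single shift need not hold, but averaging over a F\o{}lner sequence in $\Gamma$ together with the $\Gamma$-invariance of $\bbP^0$ already established delivers the averaged factorisation, which via the mean ergodic theorem upgrades to the required ergodicity. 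The same scheme applies to $\bbP^+_{\sfG,\beta}$ in the ghost-augmented graph.
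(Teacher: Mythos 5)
The paper does not actually prove this theorem: it states that the construction and its properties are established for $\bbZ^d$ in \cite{AizDumSid15} and that the argument carries over verbatim to transitive graphs. So your proposal has to stand on its own, and two of its three parts do. The factorial-moment identity $\bbE[\bfn_e(\bfn_e-1)\cdots(\bfn_e-k+1)]=(\beta J_{xy})^k\langle(\sigma_x\sigma_y)^k\rangle^0_{\Lambda,\beta}$ is correct (replace $\bfn_e$ by $\bfn_e-k$ and track the parity of the sources at $x,y$), and it gives tightness; "stochastic domination by Poisson" is a slight overstatement but harmless. The identification of the finite-dimensional marginals as $\omega_\beta(m)\,\langle\sigma_{\partial m}\rangle^{0,J'}_{\Lambda}\cdot\langle e^{-\beta\sum_{e\in S}J_e\sigma_x\sigma_y}\rangle^{0,J}_{\Lambda}$, their convergence via GKS monotonicity, and the deduction of $\Gamma$-invariance from uniqueness of the limit are all sound and are exactly the mechanism used in \cite{AizDumSid15}.

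The ergodicity paragraph, however, contains a genuine gap, and it is precisely where the substance of the theorem lies. You correctly observe that single-shift decoupling of the relevant Ising expectations may fail (indeed $\langle\cdot\rangle^0$ need not be an extremal Gibbs state), but you then assert that "averaging over a F\o lner sequence \dots delivers the averaged factorisation, which via the mean ergodic theorem upgrades to the required ergodicity." This is circular: for a $\Gamma$-invariant measure, the F\o lner-averaged factorisation $\frac{1}{|F_k|}\sum_{\varphi\in F_k}\bbP^0[A\cap\varphi^{-1}B]\to\bbP^0[A]\,\bbP^0[B]$ is \emph{equivalent} to ergodicity; the mean ergodic theorem lets you pass from ergodicity to the averaged factorisation, not the other way around. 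No input about the Ising model is supplied that would actually force the truncated correlations of even local observables (which is what the cylinder probabilities of $\bbP^0$ reduce to) to vanish, even in Ces\`aro mean. In addition, the theorem is stated for an arbitrary infinite transitive graph, with no amenability hypothesis on $\sfG$ or $\Gamma$, so a F\o lner sequence need not exist at all. To close this you would need the actual argument of \cite{AizDumSid15}, which establishes mixing of the current measures by a genuine computation with the switching lemma applied to the sourceless and two-source partition functions, rather than by appealing to ergodic-theoretic generalities.
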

Let $\bbP ^{0,+}_{G, \beta}$ denote the law of the sum of two independent currents with laws $\bbP ^{0}_{\sfG, \beta}$ and $\bbP ^{+}_{\sfG, \beta}$.

For an infinite cluster of the current (more precisely infinite connected component of the multigraph associated with $\bfn$) define the number of its 
end to be the number of the ends of the connected component in the multigraph. Recall that an infinite graph $(\mathsf V, \mathsf E)$ has $k$ ends if 
the maximum, over all finite $\mathsf V' \subset \mathsf V$, of the number of infinite connected components of the induced subgraph of $\mathsf V 
\setminus \mathsf V'$ is equal to $k$. The following theorem, on the number of infinite clusters of the current and their number of ends, is proved using a 
Burton-Keane type argument in \cite{AizDumSid15}.

\begin{theorem} \label{thm:burtonkeane}
Let $\sfG$ be an amenable graph endowed with coupling constants satisfying \emph{(C1-C4)}. For the three measures $\bbP ^{0}_{G, \beta}$, $\bbP ^{+}_{G, \beta}$, and $\bbP ^{0,+}_{G, \beta}$  
the number of infinite clusters is at most $1$. Furthermore, if the infinite cluster exists, then its number of ends is at most $2$.
\end{theorem}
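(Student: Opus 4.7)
The plan is to follow the Burton–Keane strategy adapted to random currents, as in \cite{AizDumSid15}, but taking care to rely only on amenability, transitivity, and the irreducibility condition (C3), rather than on any specific geometric feature of $\bbZ^d$. First, I would invoke $\Gamma$-ergodicity of each of $\bbP^0_{\sfG,\beta}$, $\bbP^+_{\sfG,\beta}$ and (after checking that $\bbP^{0,+}_{\sfG,\beta}$ is also $\Gamma$-invariant and ergodic as a product measure composed with addition of currents) of $\bbP^{0,+}_{\sfG,\beta}$ to conclude that the number $N$ of infinite clusters of the associated multigraph is almost surely constant, and likewise that the number of ends of the infinite cluster, if there is one, is almost surely a constant in $\{1,2,\dots,\infty\}$.

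Next I would establish a \emph{finite-energy surrogate} for currents. Because the weight $\omega_\beta(\bfn)$ involves the factor $(\beta J_{xy})^{\bfn_{xy}}/\bfn_{xy}!$, incrementing $\bfn_{xy}$ by an even number preserves $\partial\bfn$ and carries only a bounded multiplicative cost. Combined with (C3), this means that for any two vertices $u,v$ there is a finite path $u=v_0,v_1,\dots,v_k=v$ with all $J_{v_iv_{i+1}}>0$, along which the current can be increased by $2$ to create a new connection without changing the source set. This modification only affects a finite region, so its positive-measure effect survives the weak-limit passage to $\bbP^0$, $\bbP^+$ and $\bbP^{0,+}$.

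Now comes the Burton–Keane argument proper. Call a vertex $x$ a \emph{trifurcation} for the infinite cluster $\mathcal C$ if removing from the multigraph all edges of $\bfn$ incident to $x$ splits $\mathcal C$ into at least three infinite components. The finite-energy surrogate gives two things: (i) if $N\ge 2$, one can locally merge pieces of two distinct infinite clusters, so that standard reasoning forces $N\in\{0,1,\infty\}$; (ii) if $N=\infty$, one can rearrange the current inside a ball around any fixed vertex $x_0$ to make $x_0$ a trifurcation with positive probability. By $\Gamma$-invariance the density of trifurcations is then bounded below by a positive constant. A combinatorial lemma (using that any family of $T$ trifurcations inside a finite set $A$ forces at least $T+2$ distinct infinite branches exiting $A$, hence at least $T+2$ vertices in $\partial A$) yields $|\partial A|\ge c|A|$ for arbitrarily large $A$, contradicting amenability. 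Therefore $N\le 1$.

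For the number of ends I would repeat the same scheme, replacing ``trifurcation'' by ``end-trifurcation'': a vertex $x$ whose removal separates $\mathcal C$ into at least three components each containing an end of $\mathcal C$. If there were $\ge 3$ ends almost surely, the finite-energy surrogate and $\Gamma$-invariance would again force a positive density of end-trifurcations, hence the same violation of amenability. This gives at most $2$ ends. The main obstacle I expect is making the local modification rigorous uniformly across the three measures, especially for $\bbP^{0,+}$, where one must simultaneously modify two independent currents while keeping their source sets untouched, and in the infinite-range setting, where the edges incident to a vertex need not lie in a bounded graph-neighborhood, so ``removal of edges incident to $x$'' and the counting of exits into $\partial A$ have to be formulated in terms of the multigraph associated with $\bfn$ rather than the underlying graph $\sfG$.
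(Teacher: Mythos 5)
The paper does not prove Theorem \ref{thm:burtonkeane} itself; it cites the Burton--Keane type argument of \cite{AizDumSid15}, and your proposal is essentially a sketch of that same argument: even-increment modifications of the current as the finite-energy surrogate (the key point, since parity of $\sum_y \bfn_{xy}$ must be preserved), a positive density of trifurcations contradicting amenability, and the same scheme for bounding the number of ends. The one step needing genuine care --- counting boundary exits when the multigraph associated with $\bfn$ has long edges, which you correctly flag --- is resolved in the literature by using (C4) to show that the expected number of multigraph edges crossing the boundary of a F{\o}lner set $A$ is $o(|A|)$.
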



\paragraph{Notation}
From now on fix $\beta>0$ and drop it from the notation. We use $\sfB_n$ to denote the subgraph of $\sfG$ induced by vertices $\{x \in \sfG: d(0,x) \leq n\}$, where $d(.,.)$ is the graph distance. A sequence of vertices $(y_n )_{n \geq 1}$ converges to infinity if $d(0,y_n)$ tends to infinity.

We denote the value of the current $\bfn$ at $\{x,y\}$ by $\bfn_{xy}$. When $x$ and $y$ are not connected inside the set $\Lambda \subset \sfG$, we use the notation $x \notleftrightnn y$ in $\Lambda$. 

All the constants $c_i$ depend only on $\sfG$, $\{J_{xy}\}_{x,y \in \sfG}$, and $\beta$, and not on anything else. 
\section{Proof of Proposition \ref{prop:semimain}}

In the next lemmas we show that under the assumption that
\begin{equation}\label{eq:enemy}
\exists\, x, y \in \sfG, \,\,   J_{xy}>0: \, \langle \sigma_x \sigma_y \rangle ^+ > \langle \sigma_x \sigma_y \rangle ^0, \tag{$\mathfrak{A}$}
\end{equation}
 certain constraints are imposed on the geometry of infinite cluster of random current processes. 

For a current $\bfn$, let $\bfn_{[xy]}$ be the current which is equal to $\bfn$ on every $\{x',y'\}$ except at $\{x,y\}$ where it takes the value $0$ at $\{x,y\}$. 
For $x, y \in \sfG$, define the event $\calA_{x,y}$ to be the event consisted of $\bfn \in \Omega_\sfG$ with the following properties (See figure \ref{fig:eventA_inf}):
\begin{itemize}[noitemsep]
\item
$\bfn_{xy} =1$,
\item
$x \xleftrightarrow{\bfn_{[xy]}}  \infty$, $y \xleftrightarrow{\bfn_{[xy]}} \infty$, and $x \notleftrightxy y$. 
\end{itemize} 

\stepcounter{cst}
\begin{lemma} \label{lem:finsit}
Suppose \eqref{eq:enemy} holds. There exist $x,y \in \sfG$ and $c_{\thecst}>0$ such that
$$\bbP_{\sfG }^{+} \, \big[ \bfn \in \calA_{xy}  \big] > c_{\thecst},\text{ and } \, \, \bbP _{\sfG}^{0}\otimes\bbP _{\sfG}^{+} \, \big[ \bfn_1 + \bfn_2 \in \calA_{xy}\big] > c_{\thecst}.$$ 
\end{lemma}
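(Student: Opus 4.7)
I would combine the switching lemma with a one-edge insertion, the combinatorial idea that powers many of the random-current proofs in \cite{AizDumSid15}.

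The first ingredient is a quantitative disconnection estimate squeezed out of hypothesis \eqref{eq:enemy}. I would apply Lemma~\ref{lem:switch} with $\Lambda_1=\Lambda$, $\Lambda_2=\Lambda\cup\delta$, $A=\emptyset$ and $F\equiv 1$ to carry the source $\{x,y\}$ from a free current onto the ghost-augmented one, and then divide by the relevant sourceless partition functions to obtain
$$\langle\sigma_x\sigma_y\rangle^0_\Lambda=\langle\sigma_x\sigma_y\rangle^+_\Lambda\cdot\bbP^\emptyset_\Lambda\otimes\bbP^{\{x,y\}}_{\Lambda\cup\delta}\bigl[x\xleftrightarrow{\bfn_1+\bfn_2}y\text{ in }\Lambda\bigr].$$
Letting $\Lambda\uparrow\sfG$ and using Section~2 to interpret the limiting sourceful measure $\bbP^{+,\{x,y\}}_\sfG$, \eqref{eq:enemy} produces $x,y$ with $J_{xy}>0$ for which
$$\bbP^0_\sfG\otimes\bbP^{+,\{x,y\}}_\sfG\bigl[x\not\leftrightarrow y\text{ in }\bfn_1+\bfn_2\bigr]\;\geq\;1-\frac{\langle\sigma_x\sigma_y\rangle^0}{\langle\sigma_x\sigma_y\rangle^+}\;>\;0,$$
and, by projecting on the $\bfn_2$-coordinate, $\bbP^{+,\{x,y\}}_\sfG[x\not\leftrightarrow y]>0$.

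The second ingredient is the observation that the elementary map $\tilde\bfn\mapsto\tilde\bfn+\mathbbm 1_{\{xy\}}$ bijects sourceful currents satisfying $\tilde\bfn_{xy}=0$ onto sourceless currents with $\bfn_{xy}=1$, multiplies $\omega_\beta$ by $\beta J_{xy}$, and undoes the operation $\bfn\mapsto\bfn_{[xy]}$. On the disconnection event $\{x\not\leftrightarrow y\text{ in }\tilde\bfn\}$ the constraint $\tilde\bfn_{xy}=0$ is automatic; moreover, in the infinite-volume limit the combined requirements $\partial\tilde\bfn=\{x,y\}$ and $x\not\leftrightarrow y$ force each source to lie in its own infinite cluster, so the image $\bfn$ lands in $\calA_{xy}$. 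Carrying out this insertion on the plus marginal, and separately on the $\bfn_2$-coordinate in the double-current setting, while tracking $Z^{\{x,y\}}_{\Lambda\cup\delta}/Z^\emptyset_{\Lambda\cup\delta}=\langle\sigma_x\sigma_y\rangle^+_\Lambda$, I would derive
\begin{align*}
\bbP^+_\sfG[\bfn\in\calA_{xy}] &= \beta J_{xy}\,\langle\sigma_x\sigma_y\rangle^+\,\bbP^{+,\{x,y\}}_\sfG[x\not\leftrightarrow y],\\
\bbP^0_\sfG\otimes\bbP^+_\sfG[\bfn_1+\bfn_2\in\calA_{xy}] &= \beta J_{xy}\,\langle\sigma_x\sigma_y\rangle^+\,\bbP^0_\sfG\otimes\bbP^{+,\{x,y\}}_\sfG[x\not\leftrightarrow y\text{ in }\bfn_1+\bfn_2],
\end{align*}
and both right-hand sides are bounded below by the disconnection estimate of the previous paragraph.

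The genuinely delicate point is not the combinatorics but the passage to infinite volume: the identities above live naturally on $\Lambda\cup\delta$ and refer to connectivity \emph{inside} $\Lambda$, while $\calA_{xy}$ asks for infinite clusters in $\sfG$. The main thing to check is the weak-limit existence of the source-conditioned measures $\bbP^{\{x,y\}}_{\Lambda\cup\delta}$ and the stability of the separation event under this limit, i.e.\ that a disconnected source pair in the limit corresponds to two disjoint infinite clusters. These rely on the amenability-based infrastructure developed in Section~2 for the sourceless plus measure (used for instance in Theorem~\ref{thm:burtonkeane}), together with the identification of ``connection to $\delta$'' in finite volume with ``connection to $\infty$'' after the limit is taken.
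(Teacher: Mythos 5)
Your combinatorial core is the same as the paper's: the switching lemma turns $\langle\sigma_x\sigma_y\rangle^+-\langle\sigma_x\sigma_y\rangle^0$ into the probability that the sources are disconnected for a sourced-times-sourceless pair of currents, and the single-edge insertion $\tilde\bfn\mapsto\tilde\bfn+\mathbbm{1}_{\{x,y\}}$, with weight factor $\beta J_{xy}$ and the parity argument forcing each disconnected source into its own infinite path, converts that disconnection probability into the probability of $\calA_{xy}$. Your exact identities are consistent with the paper's inequalities (both yield the lower bound $\beta J_{xy}\,(\langle\sigma_x\sigma_y\rangle^+-\langle\sigma_x\sigma_y\rangle^0)$), and the projection onto the $\bfn_2$-coordinate is the paper's step \eqref{eq:a5}.

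Where you genuinely diverge is the packaging of the infinite-volume passage, and this is the one place your plan costs more than the paper's. The paper never defines a sourced infinite-volume measure: it introduces the finite-volume surrogate event $\calA^f_{xy}$, in which ``connected to $\infty$'' is replaced by ``connected to the ghost $\delta$'' and the disconnection is required only inside $\sfB_n$, and it reduces the lemma to a lower bound on $\bbP_{\sfB_n\cup\delta}[\calA^f_{xy}]$ that is uniform in $n$. Your route instead requires (i) existence of the weak limit $\bbP^{+,\{x,y\}}_\sfG$ of the source-conditioned measures, which the Section~2 theorem does not supply (it only treats the sourceless free and plus measures), and (ii) lower semicontinuity of the event $\{x\not\leftrightarrow y\}$, and of $\calA_{xy}$ itself, under that limit --- neither event is local, so weak convergence alone does not transport your identities to infinite volume. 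You correctly flag this as ``the main thing to check,'' but it is precisely the work the paper's formulation is engineered to avoid, so in a written-out version you would either have to build that sourced-measure infrastructure (it exists in the \cite{AizDumSid15} toolkit) or fall back on the paper's finite-volume event. I would also note that your parity argument (``each source lies in its own infinite cluster'') is clean in infinite volume but in finite volume with the ghost it becomes exactly the paper's observation that a sourced current disconnected inside $\sfB_n$ must join $x$ and $y$ through $\delta$; keeping the $\delta$-version makes the uniform-in-$n$ bound immediate. Apart from this organizational point, the argument is sound.
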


\begin{proof}
Take $n$ large enough such that $\{x,y \} \subset \sfB_n$. Define the set $\calA_{xy}^f$ of currents $\bfn \in \Omega_{\sfB_n \cup \delta}$ with the following properties (See figure \ref{fig:eventA_f}): 
\begin{itemize}[noitemsep]
\item
$\bfn_{xy} =1$,
\item
$x \stackrel{\bfn_{[xy]} } {\longleftrightarrow} \delta$, $y \stackrel{\bfn_{[xy]}}{\longleftrightarrow} \delta$, and $ x \notleftrightxy y$ in $\sfB_n$. 
\end{itemize} 
It is sufficient to prove that there exists $c_{\thecst}>0$ such that for any $n$ sufficiently large,
\begin{equation} \label{eq:finitgeom}
\bbP_{\sfB_n \cup \delta} \, \big[ \bfn \in \calA_{xy}^f \big] > c_{\thecst} , \quad  
\bbP _{\sfB_n}\otimes\bbP _{\sfB_n \cup \delta} \, \big[ \bfn_1 + \bfn_2 \in \calA^f_{xy}\big] > c_{\thecst}.
\end{equation}
To prove \eqref{eq:finitgeom}, we start by writing correlations in terms of currents.
\begin{align}
\langle\sigma_x \sigma_y\rangle_{\sfB_n}^{+} - \langle \sigma_x \sigma_y\rangle_{\sfB_n}^{0}  &= \frac{\displaystyle\sum_{\bfn \in \Omega_{\sfB_n \cup \delta}, \: \partial \bfn = \{ x,y\}}  \omega (\bfn)}{\displaystyle\sum_{\bfn \in \Omega_{\sfB_n \cup \delta}, \: \partial \bfn = \emptyset}  \omega (\bfn)} -
\frac{\displaystyle\sum_{\bfn \in \Omega_{\sfB_n} , \: \partial \bfn = \{ x,y\}}  \omega (\bfn)}{\displaystyle\sum_{\bfn \in \Omega_{\sfB_n}, \: \partial \bfn =  \emptyset}  \omega (\bfn)} \nonumber \\[9pt]
&=  \frac{\displaystyle\sum_{\substack{\bfn_1 \in \Omega_{\sfB_n \cup \delta}, \: \partial \bfn_1 = \{ x,y\} \\ \bfn_2 \in \Omega_{\sfB_n}, \: \partial \bfn_2 = \emptyset}}  \omega(\bfn_1) \, \omega (\bfn_2) \big( 1 - {\bf1} \big[x\xleftrightarrow {\bfn_1+\bfn_2}y \text{ in $\sfB_n$} \big] \big)}
{\displaystyle\sum_{\substack{\bfn_1 \in \Omega_{\sfB_n \cup \delta}, \: \partial \bfn_1 = \emptyset \\ \bfn_2 \in \Omega_{\sfB_n}, \: \partial \bfn_2 = \emptyset}}  \omega(\bfn_1) \, \omega (\bfn_2) } \label{eq:a2} \\[9pt]
&\leq  \frac{\displaystyle\sum_{\substack{\bfn_1 \in \Omega_{\sfB_n \cup \delta}, \: \partial \bfn_1 = \{ x,y\} \\ \bfn_2 \in \Omega_{\sfB_n}, \: \partial \bfn_2 = \emptyset}}  \omega(\bfn_1) \, \omega (\bfn_2) {\bf1} \big[ \bfn'_1 + \bfn_2 \in \calA_{xy}^f \big] }
{\displaystyle\sum_{\substack{\bfn_1 \in \Omega_{\sfB_n \cup \delta}, \: \partial \bfn_1 = \emptyset \\ \bfn_2 \in \Omega_{\sfB_n}, \: \partial \bfn_2 = \emptyset}}  \omega(\bfn_1) \, \omega (\bfn_2) }, \label{eq:a3}
\end{align}
where $n'$ is the map defined on the current as
\begin{equation*}
\bfn'_{st} = \begin{cases}
\bfn_{xy} {+ 1} \quad \{s,t\}=\{x,y\}   \\
\bfn_{xy} {\phantom{ + 1}}  \quad \{s,t\}\neq   \{x,y\}   
\end{cases}.
\end{equation*}
Equality \eqref{eq:a2} is due to the switching lemma. Inequality \eqref{eq:a3} follows from two simple observations. First, since $x$ and $y$ are not connected inside $\sfB_n$,  $\bfn_1+ \bfn_2 $ has value $0$ on $\{x,y\}$, and hence, $(\bfn'_1 + \bfn_2)_{[xy]} = \bfn_1 + \bfn_2$. Second, since $\partial \bfn_1 = \{x,y\}$, $x$ and $y$ are connected in $\bfn_1$  in  $\sfB_n \cup \delta$. However, since  $x$ and $y$ are not connected inside $\sfB_n$, there exists a path connecting $x$ and $\delta$ and there is a path connecting $y$ and $\delta$ in $\bfn_1$.
The claim $\bbP _{\sfB_n}\otimes\bbP _{\sfB_n \cup \delta} \,\big[ \bfn_1 + \bfn_2 \in \calA^f_{xy}\big] > c_{\thecst}$ follows after several observations. First, the map $\bfn \rightarrow \bfn'$ is invertible. Second, 
\begin{equation}\label{eq:finenerg}
\omega(\bfn') = \frac{\beta J_{xy} }{n_{xy}+1} \, \omega(\bfn).
\end{equation} 
And third, since $\langle \sigma_x \sigma_y \rangle ^+ > \langle \sigma_x \sigma_y \rangle ^0$, there exists $c_2$, such that for all $n$ large enough 
$ c_2 \leq \langle\sigma_x \sigma_y\rangle_{\sfB_n}^{+} - \langle \sigma_x \sigma_y\rangle_{\sfB_n}^{0}.$ Altogether it implies
$$\bbP _{\sfB_n}\otimes\bbP _{\sfB_n \cup \delta} \,\big[ \bfn_1 + \bfn_2 \in \calA^f_{xy}\big] \geq \beta \,  J_{xy} \, c_2 =: c_\thecst. $$
To prove  $\bbP_{\sfB_n \cup \delta} \, \big[ \bfn \in \calA_{xy}^f \big] > c_{\thecst}$, note that 
\begin{align}
\langle\sigma_x \sigma_y\rangle_{\sfB_n}^{+} - \langle \sigma_x \sigma_y\rangle_{\sfB_n}^{0}  &\leq  \frac{\displaystyle\sum_{\substack{\bfn_1 \in \Omega_{\sfB_n \cup \delta}, \: \partial \bfn_1 = \{ x,y\} }}  \omega(\bfn_1) \, \left( 1 - {\bf1} \big[x\xleftrightarrow {\bfn_1}y \text{ in $\sfB_n$} \big] \right)}
{\displaystyle\sum_{\substack{\bfn_1 \in \Omega_{\sfB_n \cup \delta}, \: \partial \bfn_1 = \emptyset}}  \omega(\bfn_1) }. \nonumber \\[9pt]
&\leq  \frac{\displaystyle\sum_{\substack{\bfn_1 \in \Omega_{\sfB_n \cup \delta}, \: \partial \bfn_1 = \{ x,y\}}}  \omega(\bfn_1)\, {\bf1} \big[ \bfn'_1 \in \calA_{xy}^f] }
{\displaystyle\sum_{\substack{\bfn_1 \in \Omega_{\sfB_n \cup \delta}, \: \partial \bfn_1 = \emptyset}}  \omega(\bfn_1) }. \label{eq:a5}
\end{align}
This is true since
${\bf1} \big[x\xleftrightarrow {\bfn_1}y \text{ in $\sfB_n$} \big] \leq {\bf1} \big[x\xleftrightarrow {\bfn_1+\bfn_2}y \text{ in $\sfB_n$} \big].$
The same reasoning as the previous paragraph concludes that \eqref{eq:a5} implies 
$\bbP_{\sfB_n \cup \delta} \, \big[ \bfn \in \calA_{xy}^f \big] >  c_1.$
\end{proof}
\begin{figure}[htp]
\centering
\hfill
\begin{minipage}[t]{.44\linewidth}
\centering
  \includegraphics[width=.9\linewidth]{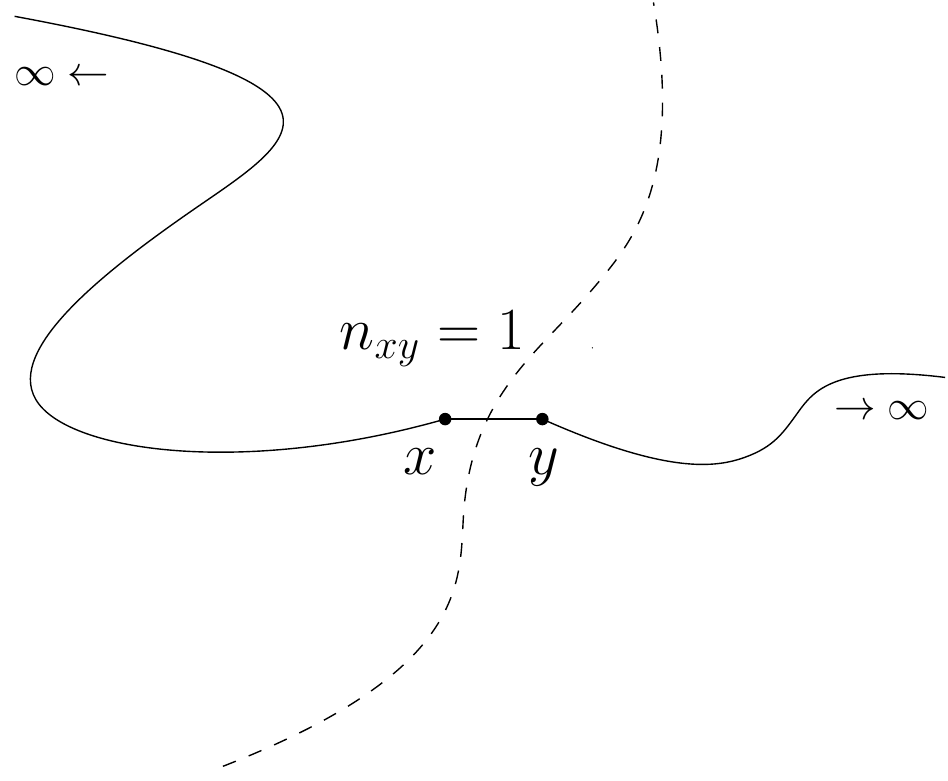}
  \caption{A diagrammatic representation of the event $\calA_{xy}$. The dashed line schematically indicates that there is no connection between $x$ and $y$ when the the value of the current on $\{x,y\}$ is set to zero.}
  \label{fig:eventA_inf}
\end{minipage}
\hfill
\hfill
\begin{minipage}[t]{.44\linewidth}
\centering
  \includegraphics[width=0.9\linewidth]{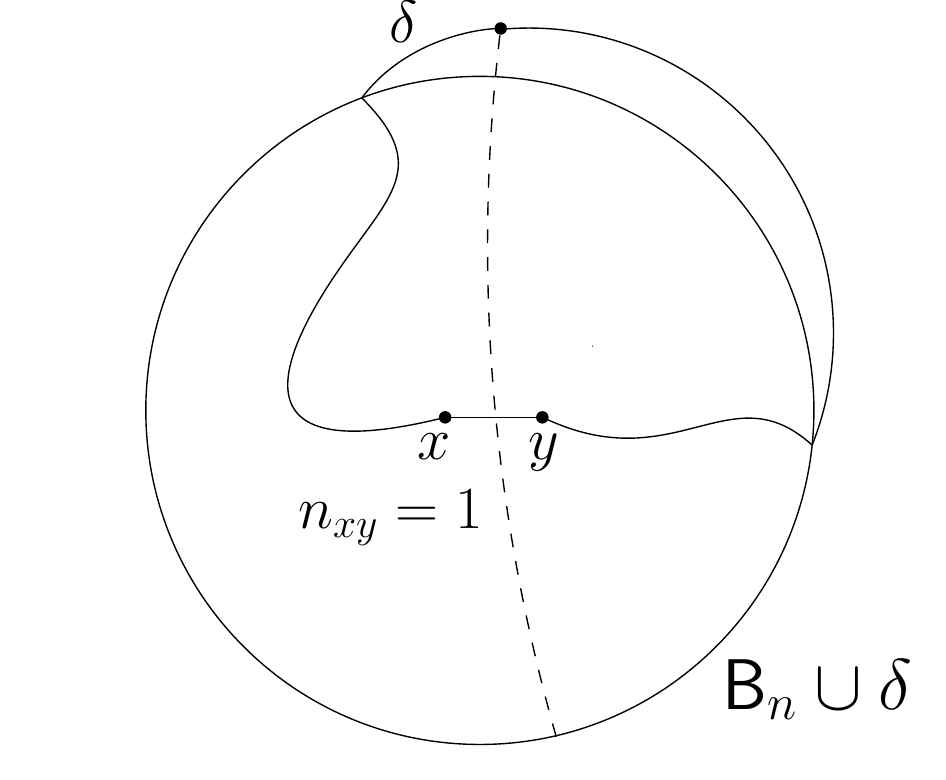}
  \caption{A diagrammatic representation of the event $\calA_{xy}^f$. The dashed line schematically indicates that there is no connection between $x$ and $y$ inside $\sfB_n$ when the value of the current on $\{x,y\}$ is set to zero.}
  \label{fig:eventA_f}
\end{minipage}
\hfill
\end{figure}

For a current $\bfn\in \Omega_\sfG$, define
$$\mathsf{U}(\bfn) := \big \lbrace \{x,y\} \in \calP_2 (\mathsf{G}) : \bfn \in \mathcal{A}_{xy}\big \rbrace.$$
For $x,y \in G$ and a current $\bfn$, define the flow $\mathsf{f}(x,y, \bfn)$ between $x$ and $y$ in $\bfn$ to be
the maximal number of edge-disjoint paths between $x$ and $y$ in the multigraph associated with $\bfn$. Define the function $\mathsf{f}(x,\infty, \bfn)$ to be the maximal number of edge-disjoint one-sided infinite path starting from $x$ in the multigraph associated with $\bfn$. 

We state the following lemma in generality instead of for the specific Ising random currents measures.
\begin{lemma} \label{lem:flow}
Let ${\bf P}$ be a measure on $\Omega_\sfG$ which is ergodic with respect to the action of $\Gamma$ satisfying
$$ {\bf P} [\mathsf U(\bfn) \neq \emptyset] >0.$$
Furthermore, assume that ${\bf P}$ almost surely there exists a unique infinite cluster and the number of its ends is at most $2$.
Then, for every $x \in G$
\begin{itemize}
\item[(i)]
$ \displaystyle\lim_{y \rightarrow \infty} \, {\bf P} \, \big[\mathsf f (x,y, \bfn) \geq 2 \big] = 0,$
\item[(ii)]
${\bf P}\, \big[ \mathsf f (x,\infty, \bfn) >2 \big] = 0.$
\end{itemize}
\end{lemma}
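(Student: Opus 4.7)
The plan is to exploit the rigid chain structure that the hypothesis ${\bf P}[\mathsf U(\bfn)\ne\emptyset]>0$, combined with the at-most-two-ends bound, imposes on the infinite cluster $C$ of $\bfn$. By definition, every $e\in\mathsf U(\bfn)$ is a bridge of $\bfn$ whose removal leaves both endpoints in infinite components; since $C$ has at most two ends, each of the two components of $C\setminus\{e\}$ must carry exactly one end, so $C$ has exactly two ends a.s.

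I would begin by upgrading the hypothesis via ergodicity: $\{\mathsf U(\bfn)\ne\emptyset\}$ is $\Gamma$-invariant, hence has probability one, and transitivity combined with the ergodic theorem gives that the density of $\mathsf U$-edges is a positive constant, so $\mathsf U(\bfn)$ is infinite a.s. Second, the cut edges in $\mathsf U(\bfn)$ are linearly ordered: if $f\in\mathsf U(\bfn)$ had one endpoint in each component of $C\setminus\{e\}$ then $e$ would cease to be a bridge, so $f$ lies on one side $A$ of $e$; since $A$ has only one end, $A\setminus\{f\}$ has one finite and one infinite part, and the nesting of these partitions gives a linear order. Third, this chain is biinfinite a.s. Indeed, if it failed to be biinfinite with positive probability, then by ergodicity it would a.s.\ possess a unique ``first'' cut edge; setting $p_e:={\bf P}[e\text{ is this first cut edge}]$, these disjoint events satisfy $\sum_e p_e=1$, yet $\Gamma$-invariance forces $p_e$ to be constant on each $\Gamma$-orbit of edges, and in an infinite vertex-transitive locally finite graph there are finitely many edge-orbits, each infinite, forcing the sum to lie in $\{0,\infty\}$---contradiction. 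Consequently every $2$-edge-connected component (``blob'') of $C$ is sandwiched between two consecutive cut edges of the biinfinite chain and lies in their finite sides, so all blobs are a.s.\ finite.

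With this structure at hand I would conclude as follows. For (i), two edge-disjoint paths from $x$ to $y$ force them into a common $2$-edge-connected component. If this component is a blob of $C$, then the a.s.\ finiteness of blobs rules out $y$ lying in it for $d(x,y)$ large; if $x,y$ instead share a finite cluster, then $\{x\leftrightarrow y,\,|C(x)|<\infty\}\subseteq\{|C(x)|\ge d(x,y)\}$, a vanishing event as $y\to\infty$. For (ii), if $\mathsf f(x,\infty,\bfn)\ge 3$ then three edge-disjoint infinite paths from $x$ distribute over the (at most) two ends of $C$, so by pigeonhole two of them must end at the same end $E$; any cut edge separating $x$ from $E$---and by the biinfinite chain there are a.s.\ infinitely many---would have to appear on both paths, contradicting edge-disjointness.

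The main obstacle I expect is the third step: establishing biinfiniteness of the chain and thereby finiteness of blobs, which marries the combinatorial end-count obstruction with translation-invariance through a mass-balance argument on cut edges, and is precisely where the hypothesis ${\bf P}[\mathsf U\ne\emptyset]>0$ plays its crucial role.
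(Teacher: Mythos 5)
Your proof is correct, and it takes a genuinely different route from the paper's. The paper's pivot is a single claim --- almost surely every infinite path in the multigraph of $\bfn$ traverses an edge of $\mathsf{U}(\bfn)$ --- proved by contradiction: a ray avoiding $\mathsf{U}(\bfn)$, combined with ergodicity (which makes $\mathsf{U}(\bfn)$ infinite) and uniqueness of the infinite cluster, is used to manufacture three pairwise disjoint infinite escape routes, violating the two-ends bound. Item (i) then follows by deleting all $\mathsf{U}$-edges (no infinite cluster survives, and two edge-disjoint paths can never use a bridge), and item (ii) by noting that three edge-disjoint rays would carry three distinct $\mathsf{U}$-edges with disjoint far sides, again giving three ends. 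You instead establish the stronger structure theorem implicit in the paper's Figure~3: the infinite cluster is a biinfinite string of finite $2$-edge-connected blocks threaded on the linearly ordered, end-separating bridges of $\mathsf{U}(\bfn)$. The extra ingredient you need --- and correctly flag as the crux --- is biinfiniteness of that chain, obtained by a mass-balance argument: a canonical extremal cut edge would define a $\Gamma$-invariant probability distribution supported on infinite orbits of pairs, which cannot sum to $1$. This is an additional use of transitivity that the paper's route avoids; in exchange, your derivations of (i) and (ii) are cleaner and you get the full geometric picture rather than only the two stated limits. Two small repairs: first, ``unique first cut edge'' should read ``unique extremal cut edge'' (an element of $\mathsf{U}(\bfn)$ one of whose sides contains no other element of $\mathsf{U}(\bfn)$), since ``first'' presupposes a choice of end; second, the parenthetical claim that there are finitely many edge-orbits is false in this setting, because $\mathsf{U}(\bfn)$ consists of pairs $\{x,y\}$ with $J_{xy}>0$ and long-range interactions produce infinitely many $\Gamma$-orbits of such pairs --- but the argument survives, since all you actually need is that every orbit is infinite, so the total mass is a countable sum of orbit-sums each lying in $\{0,\infty\}$ and still cannot equal $1$.
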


\begin{proof}
First we prove the following claim.
\begin{claim}
For ${\bf P}$ almost every $\bfn$, for any infinite path $\{ v_j\}_{j \geq 1}$ in $\bfn$ there exists $i$ such that $\{v_i, v_{i+1}\} \in U(\bfn)$.
\end{claim}

\noindent{\em Proof of Claim 1.}
Suppose that with positive probability, there exists a configuration $\bfn$ 
such that there exists a path $\{v_i\}_{i \geq 1}$  in $\bfn$
such that for any $i$, $\{v_i,v_{i+1} \} \notin \mathsf{U}(\bfn)$.  
Fix such a configuration $\bfn$ and denote the multigraph associated with $\bfn$ by ${\bf M}$. 

Note that because the law of $\bfn$ is ergodic, $\mathsf{U}(\bfn)$ is infinite. Uniqueness of the infinite cluster implies that for any $\{u_0,u_1\} \in \mathsf{U}(\bfn)$
 and for any $i$, there exists a path from $\{u_0, u_1\}$ to  $v_i$ in $\bfn$. 
Hence, by relabeling the vertices we can assume there exists a self-avoiding infinite path $u_0$, $u_1$, $v_1$, $v_2$, $\dots$, $v_n$, $\dots$ in ${\bf M}$ such that $\{u_0,u_1\} \in \mathsf{U}(\bfn  )$, and for any $i$, $\{v_i,v_{i+1} \} \notin \mathsf{U}(\bfn)$.  

Let us show that in this situation there exist two edge-disjoint one-sided infinite paths starting from $u_1$ in ${\bf M}$, none of them passing through the edge $\{u_0,u_1\}$. 
First, at least one such path exists: since $u_1$, $v_1$, $v_2$, $\dots$, $v_n$, $\dots$ is self-avoiding, it does not cross the edge $\{u_0,u_1\}$.  
Suppose there exists only one such path, then Menger's maxflow-mincut theorem implies existence of $\{v_k,v_{k+1}\}$ such that for $j\geq k$, any path from $u_1$ to $v_{j}$ in ${\bf M}$ passes through $\{v_k,v_{k+1}\}$. 
But note that $\{v_k,v_{k+1}\} \notin \mathsf{U}(\bfn)$, and hence there exists a path connecting $v_ {k+1}$ to $u_{1}$ not 
crossing through the edge $\{v_k,v_{k+1}\}$.
Therefore there is a path from $u_1$ to $v_{k+1}$ in ${\bf M}$ not using $\{v_k,v_{k+1}\}$, which is a contradiction. 
Hence, the assumption was false and there exist two edge-disjoint one-sided infinite paths starting from $u_1$ which do not pass through the edge $\{u_0,u_1\}$. 

Ergodicity of the measure ${\bf P}$ implies that almost surely there exist infinitely many $\{u_0, u_1\}$ such that $\{u_0, u_1\} \in 
\mathsf{U}(\bfn)$ and there exist two edge-disjoint one-sided infinite paths from $u_1$ in ${\bf M}$ which do not cross the edge 
$\{u_0,u_1\}$. Take $\{u_0, u_1\}$, $\{u'_0, u'_1\}$, $\{u''_0, u''_1\}$  three of such pairs. Uniqueness of the infinite cluster implies that they are all 
connected to each other. Without loss of generality, we can assume that $\{u_0, u_1\}$ is connected to $u'_0$ and $u''_0$ without using any of the edges 
$\{u_0, u_1\}$, $\{u'_0, u'_1\}$, and $\{u''_0, u''_1\}$. Since there exist three edge-disjoint infinite one-sided paths in ${\bf M}$ starting from $u_1$, 
one of these paths does not pass through any of the edges $\{u'_0, u'_1\}$ and $\{u''_0, u''_1\}$. Call this path $p = u_1, u_2, \dots$, and also note that
there exist two other infinite paths $p' = u'_1, u'_2, \dots$ and $p'' = u''_1, u''_2, \dots$ both of them disjoint from both of $\{u'_0, u'_1\}$ and $\{u''_0, u''_1\}$.

Now, in the induced subgraph of $\mathsf{G} \setminus \{u'_0, u'_1, u''_0, u''_1\}$ in ${\bf M}$, the paths $p$, $p'$, and $p''$ belong to three different connected components. This implies that the number of ends of the infinite cluster of $\bfn$ is at least $3$ which contradicts the assumption on the measure ${\bf P}$.

\hfill $\diamond$ \vspace{6pt}\\
We prove item $(i)$. For a current $\bfn$, define
\begin{equation*}
\bfn'_{x,y} =
\begin{cases}
\bfn_{xy} &\{x,y\} \notin \mathsf U(\bfn)\\
0 \quad &\{x,y\} \in \mathsf U(\bfn)
\end{cases}.
\end{equation*}
Claim 1 implies that ${\bf P}$ almost surely $\bfn'$ has no infinite cluster. Hence for any $x$, $$\lim_{y \rightarrow \infty} {\bf1} \big[x \stackrel{\bfn'}{\longleftrightarrow } y \big] = 0.$$ 
If $\mathsf f(x,y, \bfn) \geq 2$ then $x$ is connected to $y$ in $\bfn'$, hence almost surely
$$\lim_{y \rightarrow \infty} {\bf1} \big[ \mathsf f (x,y, \bfn) \geq 2 \big] = 0.$$
The convergence of the expectation follows from the bounded convergence theorem.

We turn to the proof of item $(ii)$. 
Take a configuration $\bfn$ such that there exist three one-sided infinite paths starting from $x$. 
Claim 1 states that each of them has an element of $\sf U(\bfn)$ and from this and the definition of the $\sf U(\bfn)$ we infer that the number of ends of the infinite cluster is at least 3, which contradicts Theorem \ref{thm:burtonkeane}.
\end{proof}
  \begin{figure}[htp]
    \centering
    \includegraphics[width=.58\linewidth]{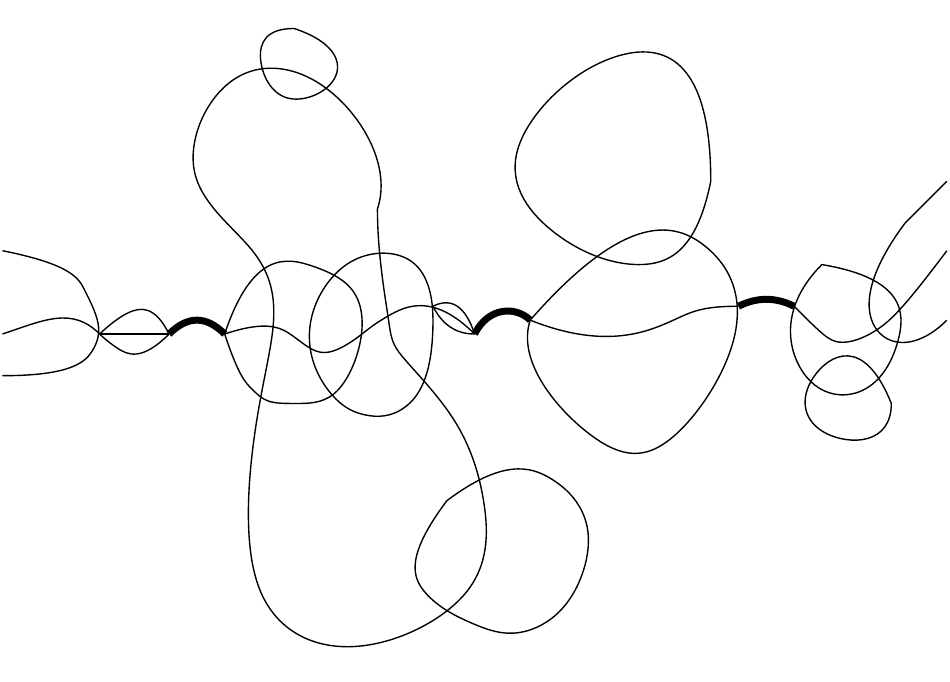}
    \caption{The geometry of the infinite cluster of $\bbP ^{0,+}_{G}$, assuming \eqref{eq:enemy}, based on Lemma \ref{lem:flow}. A string of finite loop-clusters are connected to each other by elements of $\mathsf U (\bfn_1+ \bfn_2)$. Elements of $\mathsf U (\bfn_1+ \bfn_2)$ are depicted in bold.}
    \label{crossing:fig:constr3}
  \end{figure}
\stepcounter{cst} \stepcounter{cst} 
Lemma  \ref{lem:finsit} stated that for a current on $\sfB_n \cup \delta$, having sources at $\{x,y\}$, the probability that $x$ and $y$ are only connected through $\delta$ is positive. However, the next lemma states that with positive probability $x$ and $y$ are connected to each other without using $\delta$.
\begin{lemma} \label{lem:n1con}
Assume \eqref{eq:enemy} is correct. Then, there exists $c_{\thecst}>0$ such that for all $x\in \mathsf G$ there exist infinitely many $y \in \sf G$, such that 
\begin{equation} \label{eq:jiljul}
\langle \sigma_x \sigma_y \rangle^+_{\sfB_n}  \bbP_{\mathsf{B}_n \cup \delta}^{\{x,y\}}\, \big[x \xleftrightarrow{\bfn} y \text{ in }  \mathsf{B}_m \big] > c_{\thecst},
\end{equation}
for $m=m(x,y)$ and all $n$ sufficiently large.
\end{lemma}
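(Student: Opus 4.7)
My plan is to rewrite the target product as a sum over currents and then extract the lower bound using Lemma \ref{lem:finsit} together with the geometric structure of the infinite cluster provided by Lemma \ref{lem:flow}.

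First, by expanding the two factors and using $\langle\sigma_x\sigma_y\rangle^+_{\sfB_n}=Z^{\{x,y\}}_{\sfB_n\cup\delta}/Z^\emptyset_{\sfB_n\cup\delta}$, one has the identity
\[
\langle \sigma_x \sigma_y \rangle^+_{\sfB_n}\, \bbP^{\{x,y\}}_{\sfB_n \cup \delta}\big[x\stackrel{\bfn}{\longleftrightarrow} y \text{ in } \sfB_m\big] = \frac{1}{Z^\emptyset_{\sfB_n\cup\delta}} \sum_{\substack{\partial \bfn = \{x,y\} \\ x\stackrel{\bfn}{\longleftrightarrow} y\text{ in }\sfB_m}} \omega(\bfn).
\]
For $(x,y)$ with $J_{xy}>0$, applying the reparameterization $\bfn\mapsto\bfn'$ from the proof of Lemma \ref{lem:finsit} (setting $\bfn'_{xy}=\bfn_{xy}+1$ and leaving other values unchanged) and restricting to $\bfn'_{xy}=1$ yields
\[
\langle \sigma_x \sigma_y \rangle^+_{\sfB_n}\, \bbP^{\{x,y\}}_{\sfB_n \cup \delta}\big[x\stackrel{\bfn}{\longleftrightarrow} y \text{ in } \sfB_m\big] \;\ge\; \frac{1}{\beta J_{xy}}\, \bbP^+_{\sfB_n\cup\delta}\big[\bfn_{xy}=1,\ x\stackrel{\bfn_{[xy]}}{\longleftrightarrow} y\text{ in } \sfB_m\big].
\]
For $y$ with $J_{xy}=0$ I would apply the analogous argument along a finite path of positive couplings supplied by irreducibility \emph{(C3)}, paying the multiplicative cost $\prod_i \beta J_{v_iv_{i+1}}$.

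Second, I would lower-bound the right-hand side by combining Lemma \ref{lem:finsit} with $\Gamma$-invariance. The assumption \eqref{eq:enemy} places us in the ordered phase, so $\langle\sigma_x\sigma_y\rangle^+_{\sfB_n}\ge (m^*)^2>0$ uniformly, and Lemma \ref{lem:finsit} provides a specific pair $(x_0,y_0)$ with $\bbP^+_\sfG[\calA_{x_0y_0}]>c$. The target event differs from $\calA_{x_0y_0}$ in that the two arms coming out of $x$ and $y$ in $\bfn_{[xy]}$ should close up \emph{inside} a finite $\sfB_m$ rather than escape to infinity through $\delta$; I would argue this is also a positive-probability event by rewiring the two infinite arms of a $\calA$-configuration through an independent finite sample (using the Markov/insertion-tolerance property of random currents). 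Transitivity of $\Gamma$ then extends the bound from $x_0$ to any $x$, with at least one accompanying $y$.

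The main obstacle is producing \emph{infinitely many} such $y$ for each $x$, since $\Gamma$-invariance alone supplies only one $y$ per $x$ when the stabilizer of $x_0$ in $\Gamma$ has finite orbits — as happens, for instance, on $\bbZ^d$ with only translations. To circumvent this, I would appeal to the chain-of-beads geometry from Lemma \ref{lem:flow} and Figure \ref{crossing:fig:constr3}: $\bbP^+_\sfG$-a.s.\ the infinite cluster is an infinite backbone of $\mathsf U(\bfn)$-edges interleaved with finite loop-clusters, so ergodicity yields infinitely many $\mathsf U$-bonds $\{u_i,v_i\}$ along the path starting at $x$. Each provides a candidate $y=u_i$ for which $x$ is connected to $y$ through a finite portion of the cluster (of size $m=m(x,y)$ depending on how far $y$ lies along the backbone), which, via the displayed lower bound, delivers the required constant.
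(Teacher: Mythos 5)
Your opening identity is correct, and reducing the product to $\tfrac{1}{Z^{\emptyset}_{\sfB_n\cup\delta}}\sum_{\partial\bfn=\{x,y\}}\omega(\bfn)\mathbf{1}[x\leftrightarrow y\text{ in }\sfB_m]$ is exactly how the paper starts. But the core of your argument has a genuine gap. After the reparameterization you need a uniform lower bound on $\bbP^+_{\sfB_n\cup\delta}\big[\bfn_{xy}=1,\ x\leftrightarrow y\text{ in }\sfB_m\text{ in }\bfn_{[xy]}\big]$, which is \emph{not} the event of Lemma \ref{lem:finsit}: $\calA_{xy}$ requires $x$ and $y$ to be \emph{disconnected} in $\bfn_{[xy]}$, whereas you need them \emph{connected, locally}. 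Your proposed bridge --- ``rewiring the two infinite arms through an independent finite sample using the Markov/insertion-tolerance property of random currents'' --- is not a legitimate operation: the sourceless current measure is constrained by a global parity condition and is not a Markov field in the edge variables, so one cannot resample or splice a finite region at bounded cost. This missing step is precisely what the paper's proof is built to supply, by an entirely different mechanism: it takes \emph{two} distant $\mathsf{U}$-bonds $\{x,x'\}$ and $\{y,y'\}$ (found by ergodicity plus a pigeonhole over balls, giving a constant $1/(4|\sfB_R|^4)$ uniform in the separation $d_0$, whence the ``infinitely many $y$''), deletes one unit of current from each to produce a four-source current with $\partial\bfn=\{x,x',y,y'\}$ in which the at-most-two-ends structure forces $x\leftrightarrow y$ inside some finite $\sfB_m$, and then --- the key trick --- reduces from four sources to two by conditioning on the cluster $\mathsf{C}_x(\bfn)$ and bounding the leftover factor $\langle\sigma_{x'}\sigma_{y'}\rangle_{(\sfB_n\cup\delta)\setminus\mathsf{V}(C)}$ by $1$. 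None of this appears in your proposal.

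Two further problems. First, your fallback for $J_{xy}=0$ (pushing parity along a chain of positive couplings from $x$ to $y$) incurs a multiplicative cost depending on the length of the chain, which grows without bound as $y\to\infty$; the lemma needs a single constant $c>0$ valid for infinitely many $y$, so this patch cannot deliver uniformity. (In the paper the bonds carrying the weight cost are $\{x,x'\}$ and $\{y,y'\}$, both contained in balls of fixed radius $R$, so the constant is uniform by transitivity.) Second, your route to ``infinitely many $y$'' via $\mathsf{U}$-bonds along the random backbone of Figure \ref{crossing:fig:constr3} conflates an almost-sure statement about a random set with a deterministic statement that specific pairs $(x,y)$ satisfy a uniform probability bound; the deterministic pairs must be extracted by an invariance-plus-pigeonhole argument as in the paper, not read off a single configuration.
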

\begin{proof}
We demonstrate for any $d_0>0$ there exists $y$ satisfying \eqref{eq:jiljul} and $d(x,y) > d_0$.
First note that there exists $c_4 >0$ such that for any $x \in \sfG$, there exists $y,x', y' \in \sfG$ with $d (\{x,x'\}, \{y,y'\} ) > d_0$ and 
\begin{equation}\label{eq:twopointu}
\bbP^+_\sfG \, \big[  \{x,x'\}, \{y, y'\}  \in \mathsf{U} (\bfn) \big] \geq c_4.
\end{equation}
This is due to Lemma \ref{lem:finsit} and ergodicity of the measure $\bbP^+_\sfG$. More precisely, there exists $R$ sufficiently large such that for any $x \in \sfG$, 
the probability that $\mathsf U(n)$ intersects $\sfB_R$ is at least $3/4$.
Fix $d_0$, and take two vertices $s$ and $t$ such that $d(s,t) \geq d_0+ 2R$. Now
$$\bbP^+_\sfG \, \big[ \mathsf{B}_R(s) \cap \mathsf{U}(\bfn)  \neq \emptyset, \mathsf{B}_R(t) \cap \mathsf{U}(\bfn_1)  \neq \emptyset \big] \geq 1/2,$$
and this together with $\Gamma$-invariance of the measure implies \eqref{eq:twopointu} with $c_4 := 1/(4|\mathsf B_R|^4)$.
\medskip

For a current $\bfn$ with  $\bfn_{xx'} >0$ and $\bfn_{yy'}>0$, let $\bfn'$ denote the current satisfying
\begin{equation*}
\bfn'_{st} =
\begin{cases}
\bfn_{st} - 1\quad  &\text{if }\{s,t\} =\{x,x'\} \text{ or } \{s,t\} =\{y,y'\},\\
\bfn_{st}\quad &\text{otherwise.}
\end{cases}.
\end{equation*}
Define 
$$\calB_m(x,x',y,y'):=  \{ \bfn \in \Omega_{\sfB_n \cup \delta}: x\stackrel{\bfn}{\longleftrightarrow} y \text{ in } \mathsf B_m , x \notleftrightnn x', y \notleftrightnn y'\}.$$ 
Let $\bfn$ be a current such that 
$\{x,x'\}, \{y, y'\} \in  \mathsf U (\bfn)$. 
The definition of $\mathsf{U}(\bfn)$ and the uniqueness of the infinite cluster guarantee that $\bfn' \in \calB_m(x,x',y,y')$ occurs for some $m$ (perhaps after exchanging $x$ and $x'$, and $y$ and $y'$). Hence \eqref{eq:twopointu} implies that there exists $m = m(x,y,x',y')$ large enough  such that
$$\bbP^+_\sfG \, \big[\bfn' \in  \calB_m(x,x',y,y'), \bfn_{xx'} =1, \bfn_{yy'}=1 \big] > c_4/8 .$$
This in turn gives that for all $n$ sufficiently large,
$$\bbP^+_{\sfB_n \cup \delta} \, \big[ \bfn' \in \calB_m(x,x',y,y'), \bfn_{xx'} =1, \bfn_{yy'}=1  \big] > c_4/8.$$
Note that $\bfn \longrightarrow \bfn'$ is an invertible map from the set of currents without sources to the set of currents with sources $\{x,x',y,y'\}$, and
$$\omega(\bfn') = \frac{\bfn_{xx'} \bfn_{yy'}}{\beta^2 J_{xx'} J_{yy'}} \, \omega(\bfn).$$
This implies the existence of $c_3>0$ such that for all $n$ sufficiently large
\begin{align} \label{eq:fso}
\displaystyle\sum_{\substack{\bfn \in \Omega_{\sfB_n \cup \delta}, \: \partial \bfn =\{ x,x',y',y'\} }}  \omega(\bfn) \,{\bf1} \big[\bfn \in \calB_m(x,y,x',y')\big]   \geq c_3 {\displaystyle\sum_{\bfn \in \Omega_{\sfB_n \cup \delta}, \: \partial \bfn = \emptyset}  \omega (\bfn) }.
\end{align}

In order to conclude the lemma from \eqref{eq:fso}, we need to extract information for configurations with two sources $\{x,y\}$ from configurations with four sources $\{ x,x',y,y'\}$. 

For the current configuration $\bfn$, define
$\mathsf{C}_{x} (\bfn)$ 
to be the the connected component of $x$ in the multigraph of $\bfn$. 

Call a multigraph $C$ \emph{admissible} if and only if $\{x,y \} \subset \mathsf V(C)$ and $\{x',y' \} \cap \mathsf V(C) = \emptyset$. 
For any current $\bfn$, no matter what are its sources, 
$\bfn \in \calB_m(x,y,x',y')$ if and only if $\mathsf{C}_{x}(\bfn)$ is admissible. Hence, for any $A \subset \mathsf B_n \cup \delta$,
\begin{align} \label{eq:boo1}
\displaystyle\sum_{\substack{\bfn \in \Omega_{\sfB_n \cup \delta}\\ \partial \bfn =A }}  \omega(\bfn) \,{\bf1} \big[\bfn \in \calB_m(x,y,x',y')\big]  =\displaystyle\sum_{\substack{C \text{ admissible }}} \displaystyle\sum_{\substack{\bfn \in \Omega_{\sfB_n \cup \delta} \\ \partial \bfn = A}}  \omega(\bfn) \,  {\bf1} \big[\mathsf C_{x} (\bfn) = C \big]. 
\end{align}

For a multigraph $C$  and a current $\bfn$, $\mathsf C_x(\bfn)= C$ if and only if $\bfn$ can be uniquely decomposed into $\bfn = \bfn_1+ \bfn_2$ such that the multigraph associated with $\bfn_1$ equals to $\mathsf C_x(\bfn)$, and $\bfn_2 \in \Omega_{(\sfB_n \cup \delta) \setminus \mathsf V(C)}$. 
Note that $\omega(\bfn) = \omega(\bfn_1) \, \omega(\bfn_2)$.
Furthermore if  $\partial n = \{x,x',y,y'\}$ and $C$ is admissible, then
$\partial \bfn_1 = \{x, y\}$, and $\partial \bfn_2 = \{ x',y'\}$. 
Altogether it implies for an admissible $C$, 
\begin{align} \label{eq:boo2}
 \displaystyle\sum_{\substack{\bfn \in \Omega_{\sfB_n \cup \delta} \\ \partial \bfn = \{x,x',y,y'\}}}  \omega(\bfn) \,  {\bf1} \big[\mathsf C_{x} (\bfn) = C \big]
=\omega(\bfn (C))   \displaystyle\sum_{\substack{\bfn_2 \in \Omega_{(\sfB_n \cup \delta) \setminus \mathsf V(C)}  \\ \partial \bfn_2 = \{x',y'\}}} \omega(\bfn_2),
\end{align}
where for a multigraph $C$,  $\bfn(C)$ denotes the current which its multigraph is equal to $C$. In the same manner, for an admissible $C$
\begin{align} \label{eq:boo3}
 \displaystyle\sum_{\substack{\bfn \in \Omega_{\sfB_n \cup \delta} \\ \partial \bfn = \{x,y\}}}  \omega(\bfn) \,  {\bf1} \big[\mathsf C_{x} (\bfn) = C \big]
=\omega(\bfn (C))   \displaystyle\sum_{\substack{\bfn_2 \in \Omega_{(\sfB_n \cup \delta) \setminus \mathsf V(C)}  \\ \partial \bfn_2 = \emptyset}} \omega(\bfn_2).
\end{align}
Applying \eqref{eq:iscor} implies
\begin{align*}
\displaystyle\sum_{\substack{\bfn_2 \in \Omega_{(\sfB_n \cup \delta) \setminus \mathsf V(C)}  \\ \partial \bfn_2 = \{x',y'\}}} \omega(\bfn_2) 
=   \langle \sigma_{x'} \sigma_{y'} \rangle_{(\mathsf B_n \cup \delta) \setminus \mathsf V(C)} \displaystyle\sum_{\substack{\bfn_2 \in(\mathsf B_n \cup \delta) \setminus \mathsf V(C)  \\ \partial \bfn_2 = \emptyset}} \omega(\bfn_2)
\leq  \displaystyle\sum_{\substack{\bfn_2 \in(\mathsf B_n \cup \delta) \setminus \mathsf V(C)  \\ \partial \bfn_2 = \emptyset}} \omega(\bfn_2), 
\end{align*}
which together with \eqref{eq:boo1}, \eqref{eq:boo2}, and \eqref{eq:boo3} leads to
\begin{align*}
\displaystyle\sum_{\substack{\bfn \in \Omega_{\sfB_n \cup \delta}\\ \partial \bfn =\{ x,x',y',y'\} }}  \omega(\bfn) \,{\bf1} \big[\bfn \in \calB_m(x,x',y,y')\big]  \leq& \displaystyle\sum_{\substack{\bfn \in \Omega_{\sfB_n \cup \delta} \\ \partial \bfn = \{x,y\}}}  \omega(\bfn) \,  {\bf1} \big[\bfn \in \calB_m(x,x',y,y')\big]\\
\leq& \displaystyle\sum_{\substack{\bfn \in \Omega_{\sfB_n \cup \delta} \\ \partial \bfn = \{x,y\}}}  \omega(\bfn) \,  {\bf1} \big[x\xleftrightarrow{\bfn} y \text{ in } \sfB_m  \big].
\end{align*}
Combining this with Equation \eqref{eq:fso} and then applying \eqref{eq:ispcor} conclude the proof of the Lemma.
\end{proof}
\stepcounter{cst} \stepcounter{cst}

We are now ready to prove Proposition \ref{prop:semimain}.
\begin{proof}[Proof of Proposition \ref{prop:semimain}]
Assume the proposition is wrong, and hence \eqref{eq:enemy} holds true. 
We prove the proposition by considering two different cases and reaching a contradiction in each case by different means. 

\paragraph{First Case.} $\displaystyle\lim_{y \rightarrow \infty} \langle \sigma_x \sigma_y \rangle^{0} = 0$ or $ \bbP^{0}_\sfG \big[x \stackrel{\bfn}{\longleftrightarrow} \infty \big]>0.$
\vspace{0.6cm}

\noindent Assume $\lim_{y \rightarrow \infty} \langle \sigma_x \sigma_y \rangle^{0} = 0$. Lemma \ref{lem:switch} implies
\begin{equation} \label{eq:aux2}
 \lim_{y \rightarrow \infty}\bbP _{G}^{0}\otimes\bbP _{G}^{+} \big[ x \xleftrightarrow{\bfn_1+ \bfn_2} y\big] =  \lim_{y \rightarrow \infty} \, \langle \sigma_x \sigma_y \rangle ^+  \langle \sigma_x \sigma_y \rangle ^0 \leq  \lim_{y \rightarrow\infty}  \, \langle \sigma_x \sigma_y \rangle ^0 = 0.
 \end{equation}
Suppose $\bbP _{G}^{0}\otimes\bbP _{G}^{+} \big[ x \xleftrightarrow{\bfn_1+ \bfn_2} \infty\big] >0$, then for $R$ sufficiently large and any $x, y \in \sfG$
$$\bbP _{G}^{0}\otimes\bbP _{G}^{+} \, \big[ \mathsf{B}_R(x) \xleftrightarrow{\bfn_1+ \bfn_2} \infty, \mathsf{B}_R(y)  \xleftrightarrow{\bfn_1+ \bfn_2} \infty \big] \geq 1/2.$$
This implies that for any $d_0>0$ there exist $x,y \in \sfG$, with $d(x,y) > d_0$, such that $\bbP _{G}^{0}\otimes\bbP _{G}^{+} \, \big[x \xleftrightarrow{\bfn_1+ \bfn_2} y\big] \geq 1 / 2|\sfB_R|^2$ which contradicts \eqref{eq:aux2}.
Hence,
\begin{equation} \label{eq:sumnotpercolate}
\bbP _{G}^{0}\otimes\bbP _{G}^{+} \big[ x \xleftrightarrow{\bfn_1+ \bfn_2} \infty\big] = 0.
\end{equation}
However, assuming \eqref{eq:enemy}, there exist $x',y' \in \sfG$ such that 
$\bbP _{G}^{0}\otimes\bbP _{G}^{+} [\calA_{x'y'}] >0. $
On the event $\calA_{x'y'}$, $x'$ belongs to an infinite cluster of $\bfn_1+\bfn_2$, which implies
 \begin{equation*}  
\bbP _{G}^{0}\otimes\bbP _{G}^{+} \big[ x' \xleftrightarrow{\bfn_1+\bfn_2} \infty] > 0.
\end{equation*}
This is contradicting \eqref{eq:sumnotpercolate}.
\begin{remark}
Note that here, since the $\langle . \rangle_\beta ^0$ is in the disordered phase, we gather that the set of Gibbs state is a singleton. 
The fact that $\lim_{y \rightarrow \infty} \langle \sigma_x \sigma_y \rangle^{0} = 0$ implies uniqueness of the Gibbs state is proved in \cite{AizDumSid15}, and we presented the proof for the sake of completeness.
\end{remark}
\vspace{0.4cm}
Assume $ \bbP^{0}_\sfG \big[x \stackrel{\bfn}{\longleftrightarrow} \infty \big]>0.$
Lemma \ref{lem:finsit} implies that assuming \eqref{eq:enemy},
$\bbP^+_\sfG \big[ \mathsf f(x,\infty, \bfn) \geq 2 \big] \geq c_1,$ 
and hence
$$\bbP^+_\sfG \otimes \bbP^0_\sfG \, \big[  \mathsf f(x,\infty, \bfn_1+ \bfn_2) \geq 3 \big] \geq c_1 \, \bbP^{0}_\sfG \big[x \stackrel{\bfn}{\longleftrightarrow} \infty\big] >0. $$
This contradicts the second item of Lemma \ref{lem:flow}.

\paragraph{Second Case.} $\displaystyle\lim_{y \rightarrow \infty} \langle \sigma_x \sigma_y \rangle^{0} > 0$ and $ \bbP^{0}_\sfG \big[x \xleftrightarrow{\bfn} \infty \big]=0.$

\vspace{0.6cm}

\noindent Let us begin by showing why it suffices to prove existence of $c_{\thecst}>0$ such that for any $x, y \in \sfG$, there exists $m=m(x,y)$ such that for all $n$ sufficiently large
\begin{equation} \label{eq:eqjin}
\langle \sigma_x \sigma_y\rangle^0_{\mathsf B_n} \, \bbP_{\mathsf{B}_n }^{\,x,y}\, \big[x \stackrel{\bfn}{\longleftrightarrow} y \text{ in }  \mathsf{B}_m \big] > c_{\thecst}.
\end{equation}

Assume \eqref{eq:eqjin} and take $d_0 \geq 1$ arbitrarily large. Lemma \ref{lem:n1con} implies the existence of $y$ such that $d(x,y) >d_0$ and the existence of $m$ such that for all $n$ sufficiently large 
$$ \langle \sigma_x \sigma_y\rangle_{\mathsf B_n}^+ \, \, \, \bbP_{\mathsf{B}_n \cup \delta}^{\,x,y}\, \big[x \leftrightarrow y \text{ in }  \mathsf{B}_m \big] > c_3.$$
Hence for $n$ sufficiently large enough 
\begin{equation}  
\frac{\displaystyle\sum_{\substack{\bfn_1 \in \Omega_{\sfB_n \cup \delta}, \: \partial \bfn_1 = \{ x,y\} \\ \bfn_2 \in \Omega_{\sfB_n}, \: \partial \bfn_2 = \{ x,y\}}}  \omega(\bfn_1) \, \omega (\bfn_2) {\bf1} \big[x \stackrel{\bfn_1} {\longleftrightarrow} y \text{ in } \mathsf B_m, \, \, x \stackrel{\bfn_2} {\longleftrightarrow} y \text{ in } \mathsf{B}_m  \big] \,} 
{\displaystyle\sum_{\substack{\bfn_1 \in \Omega_{\sfB_n \cup \delta}, \: \partial \bfn_1 = \emptyset \\ \bfn_2 \in \Omega_{\sfB_n}, \: \partial \bfn_2 = \emptyset}}  \omega(\bfn_1) \, \omega (\bfn_2) } \geq c_3 c_{\thecst}.
\end{equation} 
Note that if there exist a path in $\bfn_1$ in $\mathsf B_m$ and a path in $\bfn_2$ in $\mathsf B_m$, then there are at least 2 disjoint paths between $x$ and $y$ in the multigraph of $\bfn_1+\bfn_2$.
The switching lemma thus implies that
\begin{equation*}  
\frac{\, \displaystyle\sum_{\substack{\bfn_1 \in \Omega_{\sfB_n \cup \delta}, \: \partial \bfn_1 = \emptyset \\ \bfn_2 \in \Omega_{\sfB_n}, \: \partial \bfn_2 = \emptyset}}  \omega(\bfn_1) \, \omega (\bfn_2) \, {\bf1}\big[\mathsf{f}(x,y,\bfn_1+\bfn_2, \mathsf B_m) \geq 2 \big]  \,} 
{\displaystyle\sum_{\substack{\bfn_1 \in \Omega_{\sfB_n \cup \delta}, \: \partial \bfn_1 = \emptyset \\ \bfn_2 \in \Omega_{\sfB_n}, \: \partial \bfn_2 = \emptyset}}  \omega(\bfn_1) \, \omega (\bfn_2) } \geq c_3 c_{\thecst},
\end{equation*}
where $\mathsf{f}(x,y,\bfn_1+\bfn_2, \mathsf{B}_m)$ is the maximal number of edge-disjoint paths in $\bfn_1+\bfn_2$ which use only vertices of $\mathsf B_m$. Equivalently,  $\bbP _{\sfB_n}\otimes\bbP _{\sfB_n \cup \delta} \big[\mathsf{f}(x,y,\bfn_1+\bfn_2, \mathsf B_m) \geq 2\big] \geq c_3 c_{\thecst}.$
Taking the limit as $n$ tends to infinity implies that
\begin{equation}\label{eq:aux8}
\bbP^{0} _{G}\otimes\bbP^{+} _{G} \, \big[ \mathsf{f} (x,y, \bfn_1 + \bfn_2, \mathsf{B}_m) \geq 2 \big] \geq c_3 c_{\thecst}.
\end{equation}
Since $\mathsf{f} (x,y, \bfn_1 + \bfn_2) \geq \mathsf{f} (x,y, \bfn_1 + \bfn_2, \mathsf B_m) $, \eqref{eq:aux8} contradicts the first item of  Lemma \ref{lem:flow}.
\stepcounter{cst}
\medbreak
Now, we prove that if $\displaystyle\lim_{y \rightarrow \infty} \langle \sigma_x \sigma_y \rangle^{0} > 0$ and \eqref{eq:eqjin} is not true then
\begin{align} \label{eq:n0perco}
 \bbP^{0}_\sfG \big[x \stackrel{\bfn}{\longleftrightarrow} \infty \big]>0.
 \end{align}
This proves \eqref{eq:eqjin} in this case.

If $\displaystyle\lim_{y \rightarrow \infty} \langle \sigma_x \sigma_y \rangle^0 >0$ and \eqref{eq:eqjin} is not true, then there exist $c_{\thecst}>0$ and $x,y \in \sfG$ such that for any $m \geq 0$,
\begin{align} \label{eq:3rdcase} 
\langle \sigma_x \sigma_y\rangle^0_{\mathsf B_n} \, \bbP_{\mathsf{B}_n }^{\,x,y}\, \big[x \stackrel{\bfn_1}{\longleftrightarrow}  \partial \mathsf{B}_m \big] > c_{\thecst}
\end{align}
for all $n$ sufficiently large.
This is due to the fact that $\partial \bfn = \{x, y\}$, there exist a path between $x$ and $y$, and if this path is not contained in $\mathsf B_m$, then there exists a path in $\bfn$ from $x$ to $\partial \mathsf{B}_m$. 

In order to prove \eqref{eq:n0perco} from \eqref{eq:3rdcase}, we need to extract information for currents without sources from currents with sources $\{ x, y\}$.  
Take $n$ large enough and introduce a mapping
$$\mathsf T: \big \lbrace \partial \bfn = \{x,y\}, x \stackrel{\bfn}{\longleftrightarrow} \partial \mathsf B_m \big \rbrace \rightarrow \{0,1\} ^{ \lbrace \partial \bfn  = \emptyset, x \stackrel{\bfn}{\longleftrightarrow} \partial \mathsf B_m \rbrace}.$$ 
To define the map $\sf T$, fix a sequence of vertices $x=v_0, v_1, \dots, v_k=y$  in $\mathsf B_m$ such that 
$J_{v_i v_{i+1}}>0$ for $0 \leq i \leq k-1$. For any current $\bfn$, the current $\bfn' \in \mathsf{T}(\bfn)$ if and only if $\bfn'_{v_iv_{i
+1}}$ is a nonzero integer with different parity from $\bfn_{v_i v_{i+1}}$ and $\bfn'_{st} = \bfn_{st}$ for any other $s$ and $t$. Note that if $
\partial \bfn = \{x,y\} $ then for any $\bfn' \in \mathsf T(n)$, $\partial \bfn' = \emptyset$. Also if $x \stackrel{\bfn}{\longleftrightarrow} \partial \mathsf B_m$, 
then for  $\bfn' \in \mathsf T(n)$, $ x \xleftrightarrow{\bfn'} \partial \mathsf B_m$.  

For  $\bfn', \bfn'' \in \big \lbrace \partial \bfn  = \emptyset, x \xleftrightarrow{\bfn} \partial \mathsf B_m \big \rbrace$, define $\bfn' \sim \bfn''$ if and only if there exists $\bfn$ such that $\bfn', \bfn'' \in \mathsf T(\bfn)$. For the current $\bfn'$
$$ \sum_{\substack{\bfn \, \in \mathsf T^{-1}(\bfn')}} \omega(\bfn) \leq K \, \displaystyle \sum_{\bfn'' \sim \bfn'} \omega(\bfn''),$$ 
where $$K = \prod_{0 \leq i \leq k-1} \max \bigg\lbrace \frac{\cosh \beta J_{v_iv_{i+1}} }{\sinh \beta J_{v_iv_{i+1}}}, \frac{\sinh \beta J_{v_iv_{i+1}}}{\cosh \beta J_{v_iv_{i+1}} -1}\bigg \rbrace.$$
This implies
\begin{align*}
\displaystyle \sum_{\partial \bfn = \{ x,y\}} \omega(\bfn) \, {\bf 1} \big[ x \stackrel{\bfn}{\longleftrightarrow} \partial \mathsf {B}_m\big]\leq
K \displaystyle \sum_{\partial \bfn = \emptyset}  \omega(\bfn) \, {\bf 1} \big[ x \stackrel{\bfn}{\longleftrightarrow} \partial \mathsf {B}_m\big].
\end{align*}
This together with \eqref{eq:3rdcase} implies that for any $m>0$,  
\begin{align*}
&\frac{\,\displaystyle\sum_{\substack{\bfn \in \Omega_{\mathsf B_n }, \, \partial \bfn = \emptyset}}  \omega(\bfn) \,  {\bf1} \big[x \stackrel{\bfn}{\longleftrightarrow} \partial \mathsf B_m] \,}{\displaystyle\sum_{\bfn \in \Omega_{\mathsf B_n }, \: \partial \bfn = \emptyset}  \omega (\bfn) } \geq \frac{c_{\thecst}}{K}
\end{align*}
for $n$ sufficiently large, which readily implies \eqref{eq:n0perco}. 
\end{proof}

%
%

\section{Further Questions}
We finish this article by presenting a few open questions. We proved Theorem \ref{thm:main} for amenable graphs. It is noteworthy that there are examples of non-amenable graphs for which Theorem \ref{thm:main} is not valid. For instance, on the regular tree, for any $\beta > \beta_c$,  $ \langle . \rangle^0_\beta$ is not equal to $\tfrac 1 2 \langle . \rangle^+_\beta + \tfrac 1 2 \langle . \rangle ^-_\beta $. 
This leads to the following question:

\begin{question} \label{q:pcputype}
Let $\sfG$ be a non-amenable transitive graph. Does there exist $\beta$  such that $ \langle . \rangle^0_\beta \neq \tfrac 1 2 \langle . \rangle^+_\beta + \tfrac 1 2 \langle . \rangle ^-_\beta $?
\end{question}

On the other hand, even on non-amenable graphs it is expected that there is a unique Gibbs state at $\beta_c$ . Let us formulate the question in term of FK-Ising. The celebrated argument of Benjamini, Peres, Lyons and Schramm regarding critical percolation on non-amenable graphs \cite{benjamini1999critical} works for FK-Ising  and it is proved in \cite{haggstrom2002explicit} that  $\phi^0_{\sfG, \beta_c} [0 \longleftrightarrow \infty] = 0$. It is expected that the phase transition is continuous for FK-Ising.
\begin{question} \label{q:nonamcr}
Let $\sfG$ be a non-amenable transitive graph, show that $\phi^1_{\sfG, \beta_c} [0 \longleftrightarrow \infty] = 0$?
\end{question}
Another interesting question is to identify translation-invariant Gibbs states of the $q$-state Potts model.
Here we do not provide the definitions, and we refer the reader to \cite{Gri06, duminil2017lectures}.
It is not true that the number of ergodic translation-invariant states of the $q$-state Potts model  is either $1$ or $q$: it is proved in \cite{biskup2003rigorous}  that for $q \geq 3$ on $\bbZ^d$, for $d$ high enough, there exists $\beta$ for which there exist at least $q+1$ ergodic states. 
See also \cite{duminil2016discontinuity} for a related result in the case of $\bbZ^2$ and $\beta= \beta_c$, and also \cite{KotShl82}.
For $\bbZ^2$ in \cite{coquille2014gibbs}, the set of all the Gibbs states is identified for any $\beta \neq \beta_c$. 
The following question is only a representative of an array of interesting questions.
\begin{question}
Consider the $q$-state Potts model on $\bbZ^d$ with $d \geq 3$ and $\beta > \beta_c$. Is it true that the set of ergodic translation-invariant Gibbs states has exactly $q$ elements?
\end{question}

If we replace the factor $2^k$ in the definition of the FK-Ising measure with $q^k$ we obtain the random-cluster model with parameter $q$. Just like the case of $q=2$, the measure is related to $q$-state Potts model for any integer $q$. The random-cluster measure is also defined for non-integer values of $q$ in the same manner. It is natural to ask the following question.
\begin{question} \label{q:fkq}
Let $ 1 \leq q \leq 2$. Let $\sfG$ be an amenable transitive graph. Is it true that for any $\beta \in [0, \infty]$ the set of random-cluster infinite-volume measures with parameter $q$ is a singleton?
\end{question}

\bibliographystyle{alpha}
\bibliography{biblicomplete}

\newcommand{\etalchar}[1]{$^{#1}$}
\begin{thebibliography}{{Dum}17}

\bibitem[ABF87]{AizBarFer87}
M.~Aizenman, D.~J. Barsky, and R.~Fern{{\'a}}ndez.
\newblock The phase \mbox{transition} in a general class of {I}sing-type models
  is sharp.
\newblock {\em Journal of Statistical Physics}, 47(3-4):343--374, 1987.

\bibitem[ACCN88]{aizenman1988discontinuity}
M.~Aizenman, J.T. Chayes, L.~Chayes, and C.M. Newman.
\newblock Discontinuity of the magnetization in one-dimensional $1/{|x-y|^2}$
  {I}sing and {P}otts models.
\newblock {\em Journal of Statistical Physics}, 50(1):1--40, 1988.

\bibitem[ADS15]{AizDumSid15}
M.~Aizenman, H.~{Duminil-Copin}, and V.~Sidoravicius.
\newblock Random currents and continuity of {I}sing model's spontaneous
  magnetization.
\newblock {\em Communications in Mathematical Physics}, 334:719--742, 2015.

\bibitem[AF86]{AizFer86}
M.~Aizenman and R.~Fern{{\'a}}ndez.
\newblock On the critical behavior of the magnetization in high-dimensional
  {I}sing models.
\newblock {\em Journal of Statistical Physics}, 44(3-4):393--454, 1986.

\bibitem[Aiz80]{aizenman1980translation}
M.~Aizenman.
\newblock Translation invariance and instability of phase coexistence in the
  two dimensional {I}sing system.
\newblock {\em Communications in Mathematical Physics}, 73(1):83--94, 1980.

\bibitem[Aiz82]{Aiz82}
M.~Aizenman.
\newblock Geometric analysis of {$\varphi ^{4}$} fields and {I}sing models.
  {I}, {II}.
\newblock {\em Communications in Mathematical Physics}, 86(1):1--48, 1982.

\bibitem[BC03]{biskup2003rigorous}
M.~Biskup and L.~Chayes.
\newblock Rigorous analysis of discontinuous phase transitions via mean-field
  bounds.
\newblock {\em Communications in Mathematical Physics}, 238(1):53--93, 2003.

\bibitem[BLPS99]{benjamini1999critical}
I.~Benjamini, R.~Lyons, Y.~Peres, and O.~Schramm.
\newblock Critical percolation on any nonamenable group has no infinite
  clusters.
\newblock {\em Annals of Probability}, 27(3):1347--1356, 1999.

\bibitem[Bod06]{bodineau2006translation}
T.~Bodineau.
\newblock Translation invariant {G}ibbs states for the {I}sing model.
\newblock {\em Probability Theory and Related Fields}, 135(2):153--168, 2006.

\bibitem[CDIV14]{coquille2014gibbs}
L.~Coquille, H.~{Duminil-Copin}, D.~Ioffe, and Y.~Velenik.
\newblock On the {G}ibbs states of the noncritical {P}otts model on
  $\mathbb{Z}^2$.
\newblock {\em Probability Theory and Related Fields}, 158(1-2):477--512, 2014.

\bibitem[CV12]{coquille2012finite}
L.~Coquille and Y.~Velenik.
\newblock A finite-volume version of {A}izenman--{H}iguchi theorem for the 2d
  {I}sing model.
\newblock {\em Probability Theory and Related Fields}, 153(1-2):25--44, 2012.

\bibitem[DGH{\etalchar{+}}16]{duminil2016discontinuity}
H.~{Duminil-Copin}, M.~Gagnebin, M.~Harel, I.~Manolescu, and V.~Tassion.
\newblock Discontinuity of the phase transition for the planar random-cluster
  and {P}otts models with $q>4$.
\newblock {\em arXiv:1611.09877}, 2016.

\bibitem[DL]{duminil-lis2017}
H.~{Duminil-Copin} and M.~Lis.
\newblock The nesting field of the double random currents.
\newblock {\em In prepration}.

\bibitem[Dob68]{DobGibs}
R.~Dobrushin.
\newblock Gibbsian random fields for lattice systems with pairwise
  interactions.
\newblock {\em Functional Analysis and its Applications}, 2(4):292--301, 1968.

\bibitem[Dob73]{dobrushin1973gibbs}
R.~Dobrushin.
\newblock Gibbs state describing coexistence of phases for a three-dimensional
  {I}sing model.
\newblock {\em Theory of Probability and Its Applications}, 17(4):582--600,
  1973.

\bibitem[DT16]{DumTas15}
H.~{Duminil-Copin} and V.~Tassion.
\newblock A new proof of the sharpness of the phase transition for {B}ernoulli
  percolation and the {I}sing model.
\newblock {\em Communications in Mathematical Physics}, 343(2):725--745, 2016.

\bibitem[{Dum}17]{duminil2017lectures}
H.~{Duminil-Copin}.
\newblock Lectures on the {I}sing and {P}otts models on the hypercubic lattice.
\newblock {\em arXiv:1707.00520}, 2017.

\bibitem[FV16]{friedli2016statistical}
S.~Friedli and Y.~Velenik.
\newblock {\em Statistical mechanics of lattice systems: a concrete
  mathematical introduction}.
\newblock To appear in Cambridge University Press, preprint available at
  http://www. unige. ch/math/folks/velenik/smbook, 2016.

\bibitem[Geo11]{georgii2011gibbs}
H-O. Georgii.
\newblock {\em Gibbs measures and phase transitions}, volume~9.
\newblock Walter de Gruyter, 2011.

\bibitem[GHS70]{GriHurShe70}
R.~Griffiths, C.~Hurst, and S.~Sherman.
\newblock Concavity of magnetization of an {I}sing ferromagnet in a positive
  external field.
\newblock {\em Journal of Mathematical Physics}, 11:790--795, 1970.

\bibitem[Gri95]{grimmett1995stochastic}
G.~Grimmett.
\newblock The stochastic random-cluster process and the uniqueness of
  random-cluster measures.
\newblock {\em Annals of Probability}, pages 1461--1510, 1995.

\bibitem[Gri06]{Gri06}
G.~Grimmett.
\newblock {\em The random-cluster model}, volume 333 of {\em Grundlehren der
  Mathematischen Wissenschaften [Fundamental Principles of Mathematical
  Sciences]}.
\newblock Springer-Verlag, Berlin, 2006.

\bibitem[Hig81]{higuchi1981absence}
Y.~Higuchi.
\newblock On the absence of non translationally invariant {G}ibbs states for
  the two dimensional {I}sing system.
\newblock {\em In Random fields, Vol. I, II (Esztergom, 1979), volume 27 of
  Colloq. Math. Soc. János Bolyai}, pages 83--94, 1981.

\bibitem[HJL02]{haggstrom2002explicit}
O.~Haggstrom, J.~Jonasson, and R.~Lyons.
\newblock Explicit isoperimetric constants and phase transitions in the
  random-cluster model.
\newblock {\em Annals of Probability}, pages 443--473, 2002.

\bibitem[KS82]{KotShl82}
R.~Koteck{{\'y}} and S.~Shlosman.
\newblock First-order phase transitions in large entropy lattice models.
\newblock {\em Comm. Math. Phys.}, 83(4):493--515, 1982.

\bibitem[Leb77]{lebowitz1977coexistence}
J.~Lebowitz.
\newblock Coexistence of phases in {I}sing ferromagnets.
\newblock {\em Journal of Statistical Physics}, 16(6):463--476, 1977.

\bibitem[LR69]{LanRuelle}
O.~Lanford and D.~Ruelle.
\newblock Observables at infinity and states with short range correlations in
  statistical mechanics.
\newblock {\em Communications in Mathematical Physics}, 13(3):194--215, 1969.

\bibitem[Rao16]{raoufi2016note}
A.~Raoufi.
\newblock A note on continuity of magnetization at criticality for the
  ferromagnetic {I}sing model on amenable quasi-transitive graphs with
  exponential growth.
\newblock {\em arXiv:1606.03763}, 2016.

\bibitem[Sak07]{sakai2007lace}
A.~Sakai.
\newblock Lace expansion for the {I}sing model.
\newblock {\em Communications in Mathematical Physics}, 272(2):283--344, 2007.

\bibitem[Shl86]{shlosman1986signs}
S.~Shlosman.
\newblock Signs of the {I}sing model {U}rsell functions.
\newblock {\em Communications in Mathematical Physics}, 102(4):679--686, 1986.

\bibitem[Yan52]{Yan52}
C.N. Yang.
\newblock The spontaneous magnetization of a two-dimensional {I}sing model.
\newblock {\em Physical Review}, 85:808--816, 1952.

\end{thebibliography}
\small\begin{flushright}
\textsc{Institut des Hautes \'Etudes Scientifiques \\}
\textsc{Bures-sur-Yvette, France \\}
\textsc{E-mail:} \texttt{raoufi@ihes.fr}
\end{flushright}
\end{document}